\patchcmd{\section}{\scshape}{\bfseries}{}{}
\renewcommand{\@secnumfont}{\bfseries}
\theoremstyle{plain}
\DeclareMathOperator{\Ker}{Ker}
\DeclareMathOperator{\Aut}{Aut}
\DeclareMathOperator{\Inn}{Inn}
\newcommand{\hypergroup}{\mathbf{Hypgrp}}
\newcommand{\hypergroupfinite}{\mathbf{Hypgrp_{finite}}}
\newcommand{\asschmfinite}{\mathbf{Asschm_{finite}}}
\patchcmd{\abstract}{\scshape\abstractname}{\normalsize{\textbf{\abstractname}}}{}{}
\begin{document}

\title{Association schemes and Hypergroups}
\author{Jaiung Jun}
\address{Department of Mathematical Sciences, Binghamton University, Binghamton, NY 13902, USA}
\curraddr{}
\email{jjun@math.binghamton.edu}

\subjclass[2010]{05E30(primary), 20N20(secondary)}

\keywords{hypergroups, association schemes, projective geometry}

\date{}

\dedicatory{}

\begin{abstract}
\normalsize{\noindent 
In this paper, we investigate hypergroups which arise from association schemes in a canonical way; this class of hypergroups is called realizable. We first study basic algebraic properties of realizable hypergroups. Then we prove that two interesting classes of hypergroups (partition hypergroups and linearly ordered hypergroups) are realizable. Along the way, we prove that a certain class of projective geometries is equipped with a canonical association scheme structure which allows us to link three objects; association schemes, hypergroups, and projective geometries (see, \S \ref{motivation} for details). 
} 
\end{abstract}

\maketitle


\theoremstyle{definition}
\newtheorem{mydef}{\textbf{Definition}}[section]
\newtheorem{myeg}[mydef]{\textbf{Example}}
\newtheorem{rmk}[mydef]{\textbf{Remark}}
\newtheorem{defnot}[mydef]{\textbf{Definition/Notation}}
\newtheorem*{que}{\textbf{Question}}
\newtheorem*{normk}{\textbf{Remark}}

\theoremstyle{plain}
\newtheorem{mythm}[mydef]{\textbf{Theorem}}
\newtheorem*{nothm}{\textbf{Theorem}}
\newtheorem*{nothma}{\textbf{Theorem A}}
\newtheorem*{nothmb}{\textbf{Theorem B}}
\newtheorem*{nothmc}{\textbf{Theorem C}}
\newtheorem*{nothmd}{\textbf{Theorem D}}
\newtheorem*{nothme}{\textbf{Theorem E}}
\newtheorem*{nothmf}{\textbf{Theorem F}}
\newtheorem*{nothmg}{\textbf{Theorem G}}
\newtheorem{mytheorem}[mydef]{\textbf{Theorem}}

\newtheorem{lem}[mydef]{\textbf{Lemma}}
\newtheorem{pro}[mydef]{\textbf{Proposition}}

\newtheorem{claim}[mydef]{\textbf{Claim}}
\newtheorem{cor}[mydef]{\textbf{Corollary}}

\section{Introduction}

\subsection{Overview}

A hypergroup generalizes the classical notion of a group in such a way that one allows multiplication of a group to be \emph{set-valued}. For instance, for the two point set $\{0,1\}$, one may define addition as $1+1=0$, $1+0=1$, and $0+0=0$. Then one obtains the abelian group with two elements. If one changes the first addition to be $1+1=1$, then it becomes the \emph{Boolean semigroup}. Finally, if one assumes the first addition to be $1+1=\{0,1\}$ then it becomes a \emph{hypergroup}.\\
Contrary to its seemingly unnatural and exotic definition, hypergroups (in general algebraic structures which allow `multi-valued' operations) appear very naturally. For example, let $G$ be a finite group and $H$ be the set of all irreducible (finite dimensional) representations of $G$ (up to equivalence). Then for any $x,y \in H$, the tensor product $x \otimes y$ uniquely decomposes into a direct sum of elements in $H$. Therefore, the tensor product $\otimes$ provides a multi-valued operation on $H$. Of course, a hypergroup is not just a set with a multi-valued binary operation; a hypergroup satisfies certain `group-like' axioms such as the existence of a unique identity and inverses (see $\S \ref{abs}$ for the precise definition).\\ 
In fact, hypergroups emerge in many fields of mathematics, for instance, group theory, algebraic combinatorics, harmonic analysis, and coding theory to name a few.  Also, recently hyperrings and hyperfields (similar generalizations of rings and fields) receive intensive attention due to their various applications. For example, hyperrings and hyperfields are studied in connection to tropical geometry by Viro in \cite{viro}, in connection to number theory by Connes and Consani in \cite{con3}. Also, in \cite{buchstaber1997multivalued}, Buchstaber and Rees introduced a notion of an \emph{$n$-Hopf algebra} by generalizing the classical notion of a Hopf algebra and proved that the rings of functions on a certain class of hypergroups ($n$-valued groups) have $n$-Hopf algebra structures. This can be viewed as a partial generalization of the classical result that the category of Hopf algebras is equivalent to the opposite category of the category of affine group schemes. Moreover, in the recent paper \cite{baker2016matroids}, Baker and Bowler unified various generalizations of matroids (oriented, valuated, and phased) by means of hyperfields.\\

Associations schemes are also generalizations of groups and have been intensively studied in algebraic combinatorics. In \cite{zieschangmax}, Zieschang first made a connection between association schemes and hypergroups. The idea is that any association scheme has a canonical hypergroup structure by means of its complex multiplication. Zieschang also elegantly recast a notion of Tits buildings by using association schemes (see \cite{zieschang1996algebraic} or \cite{zieschangmax}).\\ 
The category of association schemes has been introduced only recently by Hanaki in \cite{hanaki2010category}. But, the category defined by Hanaki had several undesirable properties, for example the image of a morphism is not a sub-association scheme in general. In \cite{french2012functors}, French fixed these problems by implementing a new class of morphisms called admissible morphisms and reinterpreted the group correspondence (of Zieschang) as adjunctions of certain categories.\\ 

In this paper, we study Zieschang's observation in details and make further connections among three objects; association schemes, hypergroups, and projective geometries. To be precise, let $\mathcal{P}$ be a Desarguesian projective geometry of dimension $\geq 2$ which is equipped with a two-sided incidence group $G$. Suppose further that each line $\mathcal{L}$ of $\mathcal{P}$ contains at least four points. Then we canonically attach an association scheme to $\mathcal{P}$ in such a way that complex product encodes incidence relations. To be precise, the following is our initial motivation. 

\subsection{Motivation}\label{motivation}
The initial motivation of this paper came from the following two results:
\begin{mythm}$($Zieschang, \cite{zieschangmax}$)$\label{asshyp}
Each association scheme is a hypergroup with respect to the hyperoperation induced by its complex multiplication. 
\end{mythm}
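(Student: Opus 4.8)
The plan is to show that, for an association scheme $(X,S)$, the set $S$ of basis relations carries a hypergroup structure (in the sense of \S\ref{abs}) with hyperoperation given by the complex product
\[
  p \cdot q \;=\; \{\, r \in S \;:\; a_{pq}^{\,r} \neq 0 \,\}, \qquad p,q \in S,
\]
with neutral element the diagonal relation $1_X = \{(x,x) : x\in X\}$, and with the inverse of $p$ being its transpose $p^{*} = \{(y,x) : (x,y)\in p\}$. Here $a_{pq}^{\,r}$ is the structure constant of the scheme: the number of $y\in X$ with $(x,y)\in p$ and $(y,z)\in q$ for a pair $(x,z)\in r$ (the count being independent of the pair chosen). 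Thus the proof is a checklist: (i) every $p\cdot q$ is nonempty; (ii) $\cdot$ is associative; (iii) $1_X\cdot p = p\cdot 1_X = \{p\}$ for all $p$; (iv) $p^{*}$ is the unique inverse of $p$; (v) the reversibility axiom $r\in p\cdot q \iff p\in r\cdot q^{*} \iff q\in p^{*}\cdot r$.

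Items (i), (iii), (iv) are immediate from the combinatorics. For (i): since $q\neq\varnothing$, its valency $n_q = a_{qq^{*}}^{\,1_X}$ is a positive integer, so for any $(x,y)\in p$ there is some $z$ with $(y,z)\in q$; then $(x,z)$ lies in a unique $r\in S$, and that $r$ has $a_{pq}^{\,r}\ge 1$, whence $r\in p\cdot q$. For (iii): a pair $(x,y)\in 1_X$ forces $y=x$, so $a_{1_X\,p}^{\,r}\neq 0$ precisely when $r=p$, and symmetrically for $p\cdot 1_X$. For (iv): $a_{p\,p^{*}}^{\,1_X}=n_p\ge 1$ gives $1_X\in p\cdot p^{*}$ and likewise $1_X\in p^{*}\cdot p$; and if $1_X\in p\cdot q$ then some $(x,y)\in p$ has $(y,x)\in q$, so $q$ and $p^{*}$ share the element $(y,x)$ and hence $q=p^{*}$ since $S$ is a partition, giving uniqueness. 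If \S\ref{abs} imposes the reproduction law instead of, or in addition to, these, it follows formally: $S = 1_X\cdot S \subseteq (p\cdot p^{*})\cdot S = p\cdot(p^{*}\cdot S)\subseteq p\cdot S\subseteq S$, using (ii).

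The substance is (ii), which I expect to be the main obstacle, albeit a classical one. I would reduce it to the associativity of the adjacency (Bose--Mesner) algebra: writing $A_s\in\mathbb{Z}^{X\times X}$ for the adjacency matrix of $s\in S$, the scheme axioms say exactly that $A_p A_q = \sum_{r\in S} a_{pq}^{\,r}\,A_r$, so the $\mathbb{Z}_{\ge 0}$-span of $\{A_s\}_{s\in S}$ is closed under matrix multiplication. Unwinding the definition of the complex product, $(p\cdot q)\cdot r = \{\, u\in S : \sum_{s} a_{pq}^{\,s} a_{sr}^{\,u}\neq 0 \,\}$ and $p\cdot(q\cdot r) = \{\, u\in S : \sum_{t} a_{pt}^{\,u} a_{qr}^{\,t}\neq 0 \,\}$; since all structure constants are nonnegative there is no cancellation, so these two sets are exactly the supports of $(A_p A_q)A_r$ and $A_p(A_q A_r)$, which coincide because matrix multiplication is associative. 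Equivalently, and without invoking matrices, one proves the identity $\sum_{s} a_{pq}^{\,s} a_{sr}^{\,u} = \sum_{t} a_{pt}^{\,u} a_{qr}^{\,t}$ directly: for a fixed $(w,z)\in u$, both sides count the pairs $(x,y)\in X\times X$ with $(w,x)\in p$, $(x,y)\in q$, and $(y,z)\in r$. This is where one genuinely uses that the scheme's counts do not depend on the chosen base pair.

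Finally, (v) follows from the usual symmetry relations among structure constants, themselves proved by double counting: transposing all pairs gives $a_{pq}^{\,r}=a_{q^{*}p^{*}}^{\,r^{*}}$, and re-basing the count at a different vertex gives $n_r\,a_{pq}^{\,r} = n_p\,a_{r q^{*}}^{\,p} = n_q\,a_{p^{*} r}^{\,q}$. As every valency is positive, $a_{pq}^{\,r}\neq 0$ if and only if both $a_{rq^{*}}^{\,p}\neq 0$ and $a_{p^{*}r}^{\,q}\neq 0$, which is precisely reversibility. Assembling (i)--(v) shows $(S,\cdot,1_X)$ is a hypergroup.
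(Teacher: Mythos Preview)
The paper does not supply its own proof of this theorem: it is quoted as a result of Zieschang (with citation) and used purely as background motivation, so there is nothing to compare against. Your axiom-by-axiom verification is correct and is essentially the standard argument one finds in Zieschang's treatment.

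One minor point worth flagging: the paper's definition of an association scheme allows $X$ to be infinite, with cardinal-valued structure constants $a_{pq}^{\,r}$. Your Bose--Mesner matrix argument for associativity tacitly assumes finiteness (so that the sums and matrix products are honest); the alternative you offer---that for fixed $(w,z)\in u$ both $\sum_s a_{pq}^{\,s} a_{sr}^{\,u}$ and $\sum_t a_{pt}^{\,u} a_{qr}^{\,t}$ count the set of pairs $(x,y)$ with $(w,x)\in p$, $(x,y)\in q$, $(y,z)\in r$---is the argument that survives in general, read as a bijection rather than a numerical identity. Likewise, for reversibility the identity $n_r\, a_{pq}^{\,r} = n_p\, a_{rq^{*}}^{\,p}$ should be understood as counting the set of triples $(x,y,z)$ with $(x,y)\in p$, $(y,z)\in q$, $(x,z)\in r$ in two ways; since a product of nonzero cardinals is nonzero, your conclusion that $a_{pq}^{\,r}\neq 0 \iff a_{rq^{*}}^{\,p}\neq 0$ still follows. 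With that reading, your proof goes through without change.
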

\begin{mythm}$($Connes and Consani, \cite{con3}$)$\label{corr}
There exists one-to-one correspondence between projective geometries (with an extra condition that each line contains at least four points) and a certain class of commutative hypergroups.
\end{mythm}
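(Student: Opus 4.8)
The plan is to realise the correspondence through a pair of mutually inverse constructions -- one producing a hypergroup from a projective geometry, the other recovering the geometry from the hypergroup -- and then to identify intrinsically the class of hypergroups that occurs. Given a projective geometry $\mathcal{P}$ with point set $P$ in which every line carries at least four points, I would form the pointed set $H(\mathcal{P}) := P \sqcup \{0\}$ and equip it with the hyperaddition given by $0 + h = \{h\}$ for all $h$, by $x + x = \{0, x\}$ for $x \in P$, and by $x + y = \overline{xy} \setminus \{x, y\}$ for distinct $x, y \in P$, where $\overline{xy}$ is the unique line through $x$ and $y$ viewed as its set of points (and $A + B = \bigcup_{a \in A, b \in B} a + b$ for subsets, as usual). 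Conversely, to a suitable commutative hypergroup $H$ I would attach the geometry $\mathcal{G}(H)$ with point set $H \setminus \{0\}$ and with lines the sets $\overline{xy} := (x + y) \cup \{x, y\}$ for distinct points $x, y$. The relevant class should be: commutative hypergroups $H$ with identity $0$, with unique inverses, and reversible, such that $x + x = \{0, x\}$ for every $x \neq 0$ and such that for distinct $x, y \neq 0$ the set $x + y$ has at least two elements and contains neither $x$ nor $y$ -- this last requirement being the hypergroup shadow of ``every line has at least four points''.

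First I would check that $H(\mathcal{P})$ is a hypergroup of the claimed type. Commutativity and the neutrality of $0$ are immediate; each $x \in P$ is its own inverse since $0 \in x + x$, and this inverse is unique because $0 \notin x + y$ whenever $x \neq y$, the sum then lying inside $P$. Reversibility, $z \in x + y \Rightarrow x \in z + y$, reduces -- once the cases $x = y$ and $z \in \{0, x\}$ are dispatched -- to the fact, built into $\mathcal{P}$, that the line through two distinct points of $\overline{xy}$ is again $\overline{xy}$. The only axiom with genuine content is associativity, $(x + y) + z = x + (y + z)$: when $x, y, z$ are in general position, unwinding the two sides shows that each equals the set of points of the plane $\langle x, y, z \rangle$ lying on none of the three sides of the triangle $xyz$, and the agreement of these two descriptions is exactly the Veblen--Young axiom (two coplanar lines always meet), applied to the lines $\overline{xp}$ and $\overline{yz}$ for a candidate point $p$. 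The remaining configurations -- two of $x, y, z$ equal, or the three of them collinear -- must be verified directly, and it is here that the four-point hypothesis is indispensable: for a line $\ell = \{x, y, w\}$ with only three points one computes $(x + x) + y = \{y, w\}$ while $x + (x + y) = \{y\}$, whereas a fourth point on $\ell$ restores associativity.

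Next I would carry out the converse. For a hypergroup $H$ in the class the sets $\overline{xy}$ have at least four elements by the class axioms; two distinct points lie on a unique line, since associativity and reversibility together force $\overline{pq} = \overline{xy}$ whenever $p \neq q$ are points of $\overline{xy}$, so that any line containing $p$ and $q$ coincides with $\overline{pq}$; and the Veblen--Young axiom for $\mathcal{G}(H)$ is read off from associativity by reversing the computation above. That the two constructions are mutually inverse is then essentially a matter of inspection: $\mathcal{G}(H(\mathcal{P}))$ has the same points as $\mathcal{P}$, and its line through $x$ and $y$ is $(x + y) \cup \{x, y\} = \bigl(\overline{xy} \setminus \{x, y\}\bigr) \cup \{x, y\} = \overline{xy}$; while in $H(\mathcal{G}(H))$ one recovers $x + y = \overline{xy} \setminus \{x, y\} = \bigl((x + y) \cup \{x, y\}\bigr) \setminus \{x, y\} = x + y$ for $x \neq y$ (using that $x, y \notin x + y$), and $x + x = \{0, x\}$ holds on both sides. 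Since a projective geometry is determined by its point--line incidence, this yields the claimed one-to-one correspondence.

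The main obstacle I anticipate is not any isolated step but the combinatorial bookkeeping in the associativity--Veblen--Young equivalence: enumerating and checking the degenerate configurations of $x, y, z$ and, in parallel, isolating the minimal list of axioms on the hypergroup side that makes the correspondence exactly bijective. It is precisely this that forces the ``four point'' rather than the weaker ``three point'' hypothesis, since a three-point line already destroys associativity of the induced hyperaddition.
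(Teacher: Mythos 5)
The paper does not actually prove this statement: it is quoted from Connes--Consani \cite{con3} (with the construction only sketched informally in \S\ref{motivation}), and your proposal reproduces exactly that standard construction --- $x+y=\overline{xy}\setminus\{x,y\}$ for distinct points, $x+x=\{0,x\}$, lines of $\mathcal{G}(H)$ given by $(x+y)\cup\{x,y\}$, with associativity in general position matching the Veblen--Young axiom and the four-point hypothesis entering precisely in the degenerate/collinear cases --- so it is correct and follows essentially the same route as the cited source. One small remark: the extra axioms you add to cut out the class ($x,y\notin x+y$ and $|x+y|\ge 2$ for distinct nonzero $x,y$) are automatic for any commutative hypergroup with $x+x=\{0,x\}$ (the first from reversibility and $-x=x$, the second from associativity applied to $(x+x)+y=x+(x+y)$), so your class is the same as the class of $\mathbf{K}$-vector spaces used in \cite{con3}.
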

To be a bit more precise, each association scheme $S$ on a nonempty set $X$ is equipped with a binary operation (complex multiplication). Zieschang proved that $S$ with complex multiplication is a hypergroup with the identity $1_X$.\\ 
On the other hand, to a projective geometry $\mathcal{P}$, one can associate a hypergroup $H$ in such a way that the underlying set of $H$ is the set of points of $\mathcal{P}$ with one extra point and roughly speaking, the hyperoperation is defined by letting $x+y$ be the set of all points in the line which contains $x$ and $y$. For the precise statement, we refer the readers to \cite{con3} or \cite{thas2014hyperfield}.\\  
Based on these two results, a natural question to ask is whether there is a canonical way to attach an association scheme to projective geometry by means of hypergroups. In fact, one can canonically construct an association scheme from projective geometry by using flags (see Example \ref{fanoexample}). However, this construction is not satisfactory in the sense that it is not compatible with the correspondence given in Theorem \ref{corr}. Hence our question is as follows:
\begin{que}\label{question}
Let $H$ be a commutative hypergroup which corresponds to a projective geometry $\mathcal{P}$ with an extra condition on the number of points on lines. Can we always realize $\mathcal{P}$ as an association scheme $S$ (on a set $X$) such that the following diagram commutes:
\begin{equation}
\begin{tikzcd}[column sep=huge]
(X,S)\arrow{r}{\textrm{Theorem \eqref{asshyp}}} \arrow{d} &\quad H \arrow{dl}{\textrm{Theorem \eqref{corr}}} \\
\mathcal{P} 
\end{tikzcd}
\end{equation}
\end{que}

We prove that the answer is affirmative when $\mathcal{P}$ is equipped with a two-sided incidence group $G$ and satisfies the following condition (see \S\ref{quoquo}):
\begin{itemize}\label{conditions}
\item 
Each line $\mathcal{L}$ of $\mathcal{P}$ contains at least four points. 
\item
$\mathcal{P}$ is Desarguesian and of dimension $\geq 2$. 
\end{itemize}

In particular, this implies that a projective geometry $\mathcal{P}$ with a two-sided incidence group $G$ satisfying the above conditions itself can be considered as an association scheme $S$ and complex multiplication of $S$ encodes incidence information (see Corollary \ref{projpro}). We refer the reader to \cite{con3} for the definition of a two-sided incidence group. 

\subsection{Statement of results}
 This paper is organized as follows.\\ 
In $\S 2$, we review the definitions of association schemes and hypergroups as well as some of their basic properties which will be used in the sequel.\\
In $\S 3$, by adopting a notion of admissible morphisms from \cite{french2012functors}, we construct a functor from the category $\asschmfinite$ of finite association schemes to hypergroups.\\
In $\S 4$, we define that a hypergroup $H$ is \emph{realizable} if $H$ arises from an association scheme and $H$ is \emph{finitely realizable} if $H$ arises from a finite association scheme. We study several properties of realizable hypergroups. In particular, we prove the following. 
\begin{nothma}$($Lemma \ref{product}, Proposition \ref{iff}, Corollary \ref{realcongruence}$)$
\begin{enumerate}
\item 
Let $H_1$ and $H_2$ be realizable hypergroups. Then $H_1\times H_2$ is also realizable.
\item
Sub-hypergroups and quotients of finitely realizable hypergroups are finitely realizable. 
\item
Let $H$ be a hypergroup. Then for any congruence relation (properly defined) $\equiv$ on $H$, $H/\equiv$ is a finitely realizable hypergroup. 
\end{enumerate}
\end{nothma}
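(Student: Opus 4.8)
The plan is to prove the three parts essentially independently, but all by reducing to an explicit construction of association schemes.

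For part (1), the plan is to take association schemes $(X_1, S_1)$ and $(X_2, S_2)$ realizing $H_1$ and $H_2$ respectively and to form the product association scheme on $X_1 \times X_2$ whose relations are the products $s_1 \times s_2$ for $s_1 \in S_1$, $s_2 \in S_2$ (the set $\{((x_1,x_2),(y_1,y_2)) : (x_1,y_1)\in s_1,\ (x_2,y_2)\in s_2\}$). First I would check that this family of relations is again an association scheme: it partitions $(X_1\times X_2)^2$, the diagonal is $1_{X_1}\times 1_{X_2}$, the family is closed under the transpose, and the structure constants of the product factor as products of the structure constants of the two factors. Then I would verify that the induced hypergroup is $H_1 \times H_2$: the key point is that the complex product $(s_1\times s_2)(t_1\times t_2)$ equals $\bigcup_{u_1 \in s_1 t_1,\ u_2 \in s_2 t_2} u_1\times u_2$, which exactly matches the hyperoperation on the product hypergroup. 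This part I expect to be routine once the product scheme is set up correctly.

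For part (2), the plan is to use the characterization in Proposition \ref{iff} (which, from its placement, I expect to give a combinatorial/algebraic criterion for when a hypergroup is realizable). Given a finitely realizable $H$ with realizing finite scheme $(X,S)$, and a sub-hypergroup $K \leq H$, I would look for a ``sub-object'' of $(X,S)$ whose induced hypergroup is $K$ — concretely, the relations in $S$ corresponding to elements of $K$ generate, on a suitable $S$-invariant subset of $X$ or on $X$ itself, a closed subset (in Zieschang's terminology) of the scheme, and closed subsets of association schemes are again association schemes whose induced hypergroup is the corresponding sub-hypergroup. For quotients, I would use the notion of a quotient scheme $S/\!/T$ by a closed subset $T$: the induced hypergroup of $S/\!/T$ is the quotient hypergroup $H/\!/T$, and every quotient of $H$ by a sub-hypergroup arises this way. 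The main obstacle here is matching the hypergroup-theoretic notions of sub-hypergroup and quotient (as defined in \S 2 of the paper) precisely with the scheme-theoretic notions of closed subset and quotient scheme, and checking finiteness is preserved — which is immediate since $X$ is finite.

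For part (3), the plan is to reduce to part (2) or to give a direct construction. A congruence $\equiv$ on a hypergroup $H$ should correspond to (or be refined by) a sub-hypergroup, so $H/\!\equiv$ is a quotient of $H$ in the sense of part (2) — but here $H$ is an arbitrary hypergroup, not assumed realizable, so I cannot simply invoke (2). Instead I would exhibit $H/\!\equiv$ directly as the induced hypergroup of a concrete finite association scheme. The natural candidate is a scheme on the set $X = H/\!\equiv$ (or on $H$ itself) whose relations are indexed by the classes of $\equiv$: for each class $c$, let $s_c = \{(x, y) : x,y \in X,\ y \in \bar x + c\}$ or a similar recipe using the hyperoperation on $H/\!\equiv$, and then verify the association scheme axioms and that complex multiplication recovers the hyperoperation. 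The hard part — and I expect this to be the main obstacle of the whole theorem — is verifying that the structure constants are well-defined constants (independent of the chosen pair in $s_c$); this is exactly where the precise definition of ``congruence relation'' in the paper must be used, and presumably the definition is tailored so that this regularity holds. Once the scheme is built, finiteness and realizability of $H/\!\equiv$ are immediate, and taking $\equiv$ to be the trivial congruence would in particular reprove that every hypergroup maps onto a finitely realizable one.
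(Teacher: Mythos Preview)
Your plans for parts (1) and (2) match the paper's arguments. For (1) the paper takes the product scheme $S_1\boxtimes S_2$ and checks, via the factorization $a_{\alpha\beta}^\gamma=a_{p_1q_1}^{r_1}a_{p_2q_2}^{r_2}$ of structure constants, that $\mathbf{H}(S_1\boxtimes S_2)\cong \mathbf{H}(S_1)\times\mathbf{H}(S_2)$; this is exactly your outline. For (2), Proposition~\ref{iff} is not a separate ``criterion'' but literally the statement you are asked to prove; the paper proves it just as you suggest, by identifying a sub-hypergroup with a closed subset $T\subseteq S$ (realized on $Y=\{y:(x_0,y)\in T\}$ for a fixed $x_0$, with $a_{p_Yq_Y}^{r_Y}=a_{pq}^r$) and a quotient hypergroup with the quotient scheme $S\sslash N$.

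Part (3), however, is a genuine misreading. The summary in Theorem~A is loosely phrased; the actual Corollary~\ref{realcongruence} is the \emph{biconditional}: $H$ is finitely realizable if and only if $H/\!\!\equiv$ is finitely realizable for every congruence $\equiv$. The ``only if'' direction is the nontrivial one, and its proof is a short reduction to part~(2): the canonical projection $\pi:H\to H/\!\!\equiv$ is shown to be strict, so by the first isomorphism theorem $H/\!\!\equiv\,\cong H/\Ker(\pi)$, and then one applies the quotient half of (2). The ``if'' direction is the triviality you mention at the end (take $\equiv$ trivial). Your proposed direct construction of an association scheme on the set $H/\!\!\equiv$ cannot work as stated: if it did, taking the trivial congruence would show that \emph{every} finite hypergroup is finitely realizable, which is false (for instance the sign hypergroup $\mathbf S$ is realizable but not finitely realizable). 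The place your argument would break is exactly where you flag it --- the ``structure constants'' coming from the recipe $s_c=\{(x,y):y\in x*c\}$ are not in general independent of the pair, and no definition of congruence can force this for arbitrary $H$.
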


In $\S 5$, we provide two interesting classes of realizable hypergroups. The first class naturally arises from a group $G$ and a subgroup $P$ of the group $\Aut(G)$ of automorphisms of $G$ and generalizes the group correspondence of association schemes. As a special case, we prove that projective geometries satisfying the aforementioned conditions are association schemes. Note that this association scheme structure differs from the one obtained from flags (see, Example \ref{fanoexample}). To be precise, we prove the following. 

\begin{nothmb}$($ \S \ref{partition} and \S \ref{quoquo}  $)$\label{thmb}\\
Let $G$ be a group and $P$ be a subgroup of the group $\Aut(G)$ of automorphisms of $G$.
\begin{enumerate}
\item
 We construct an association scheme $(G,P)$ depending on $G$ and $P$. In particular, when $P$ is a trivial group, this recovers the usual association scheme attached to a group $G$.
\item
Let $A$ be a commutative ring and $G$ be a subgroup of the group $A^\times$ of multiplicative units of $A$. For a quotient hyperring $A/G$, the hypergroup $(A/G,\oplus)$ is realizable. 
\end{enumerate}
\end{nothmb}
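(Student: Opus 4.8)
The plan is to deduce part (2) from part (1) by a careful choice of $G$ and $P$, so I first carry out the construction in (1).

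\textbf{Part (1).} Given a group $G$ and $P\le\Aut(G)$, I form the semidirect product $G\rtimes P$ and let it act on the set $X:=G$ by $(g,\varphi)\cdot x=g\,\varphi(x)$; this action is transitive and the stabilizer of $e$ is $\{(e,\varphi):\varphi\in P\}\cong P$. Define $(G,P)$ to be the set of orbitals of this action. Unwinding the definition, $(x,y)$ and $(x',y')$ lie in the same orbital iff $\varphi(x^{-1}y)=x'^{-1}y'$ for some $\varphi\in P$, so the orbitals are exactly the sets
\[
R_O:=\{(x,y)\in G\times G:\ x^{-1}y\in O\},
\]
indexed by the orbits $O$ of $P$ acting on $G$ by automorphisms. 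The association scheme axioms are then checked directly (so that the argument works for infinite $G$ as well): the $R_O$ partition $G\times G$ because the $P$-orbits partition $G$; the orbit $\{e\}$ gives $R_{\{e\}}=\Delta_G$; and $R_O^{\ast}=R_{O^{-1}}$, where $O^{-1}=\{o^{-1}:o\in O\}$ is again a $P$-orbit. For the structure constants, given $(x,z)\in R_O$ I translate by $(x^{-1},\mathrm{id})$ (which fixes every $R_O$ setwise) to reduce to $x=e$; the count $\#\{y:\ y\in O_1,\ y^{-1}z\in O_2\}$ for $z\in O$ is then independent of the chosen representative $z$ precisely because $O_1,O_2$ are $P$-stable (substitute $z\mapsto\varphi(z)$, $y\mapsto\varphi(y)$). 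Specializing to $P=\{\mathrm{id}\}$, the orbits are the singletons $\{g\}$ and $R_{\{g\}}=\{(x,y):x^{-1}y=g\}$, i.e.\ the usual thin scheme of $G$.

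\textbf{Part (2).} For a commutative ring $A$ and $G\le A^{\times}$, recall the quotient hyperring $A/G$: its elements are the multiplicative orbits $aG$ ($a\in A$), the product is $aG\cdot bG=abG$, and the hyperaddition is $aG\oplus bG=\{cG:\ c\in aG+bG\}$. I apply part (1) to the \emph{additive} group $(A,+)$ together with the subgroup $P:=\{m_g:g\in G\}\le\Aut(A,+)$, where $m_g(a)=ga$; since $1\in A$ the assignment $g\mapsto m_g$ identifies $G$ with $P$, and the $P$-orbit of $a$ is exactly $aG$. Thus the scheme $(A,P)$ has relations $R_{aG}=\{(x,y):\ y-x\in aG\}$ indexed by $A/G$. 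It then remains to identify the hypergroup attached to $(A,P)$ by Theorem \ref{asshyp} (and by the functor of \S 3 when $A$ is finite) with $(A/G,\oplus)$ under the bijection $R_{aG}\leftrightarrow aG$. By the computation in part (1), $p^{cG}_{aG,bG}=\#\{y\in aG:\ c-y\in bG\}$ for any representative $c$ of $cG$; this is nonzero exactly when $c\in aG+bG$, i.e.\ when $cG\in aG\oplus bG$ (using that $aG+bG$ is $G$-stable, so it meets $cG$ iff it contains it). Hence the complex product $R_{aG}R_{bG}=\{R_{cG}:p^{cG}_{aG,bG}\neq 0\}$ corresponds to $aG\oplus bG$; moreover $R_{\{0\}}=\Delta_A\leftrightarrow 0G$ is the identity and $R_{aG}^{\ast}=R_{(-a)G}\leftrightarrow(-a)G$ is the inverse of $aG$. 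So the bijection is an isomorphism of hypergroups, which simultaneously re-proves that $(A/G,\oplus)$ is a hypergroup and exhibits it as realizable — and as finitely realizable whenever $A$ is finite.

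\textbf{Main obstacle.} The construction in (1) is a routine generalization of the Schurian scheme of a transitive permutation group, so the real content is in (2): recognizing that one must act on $(A,+)$ by the multiplications $m_g$, and then checking that complex multiplication in $(A,P)$ reproduces the hyperaddition of $A/G$ on the nose — in particular that ``$p^{cG}_{aG,bG}\neq0$'' is literally the relation ``$cG\in aG\oplus bG$'' and that the identity and inverses correspond. A secondary point is to fix the correct notion of ``association scheme'' when $A$ is infinite (infinite ground set, possibly infinite structure constants), so that Theorem \ref{asshyp} still applies and one genuinely gets ``realizable'' rather than only ``finitely realizable.''
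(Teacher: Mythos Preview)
Your proposal is correct and follows essentially the same route as the paper: in Part (1) you verify directly that the $P$-orbits on $G$ index the cells of an association scheme on $G$ (the paper does this in Proposition \ref{associationschemepartitiongroup}, with the cosmetic difference that it uses the convention $xy^{-1}$ rather than your $x^{-1}y$), and in Part (2) you apply this to $(A,+)$ with $P=\{m_g:g\in G\}$ and check that complex multiplication matches $\oplus$, exactly as in the paper's Proposition \ref{projpro} together with Corollary \ref{realizalcor}. Your extra framing via the Schurian scheme of $G\rtimes P$ is a nice conceptual gloss, but since you then check the axioms directly (as needed for infinite $G$) it does not constitute a genuinely different argument.
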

For the relations between the above theorem and projective geometries, see $\S \ref{quoquo}$.\\  

The last class is obtained from a totally ordered abelian group $(\Gamma,+)$. In this case, a classical result tells us that there exist a field $F$ and a non-Archimedean valuation $\nu$ on $F$ such that $(\Gamma,+)$ is the value group of $\nu$. We say that a triple $(\Gamma,\nu,F)$ satisfies the triangle condition if the following holds:\\

\textbf{(Triangle Condition)} For all $a,b,a',b' \in F$, $r \in \Gamma'=\Gamma \cup \{\infty\}$ such that $\nu(a-b)=\nu(a'-b')=r$, we have the following set bijection of nonempty sets:
\begin{equation}
\{y \in F \mid \nu(a-y)=\nu(y-b)=r\} \simeq \{y \in F \mid \nu(a'-y)=\nu(y-b')=r\}.
\end{equation}
\\
We first review how one enriches a totally ordered abelian group $(\Gamma,\cdot)$ to a hyperfield $(K\Gamma,+,\cdot)$ and prove the following. 

\begin{nothmc}$($\S \ref{linear}$)$
\item
Let $\Gamma$ be a totally ordered abelian group which satisfies the triangle condition for some non-Archimedean valuation $\nu$. Then the hypergroup $(K\Gamma,+)$ is realizable. 
\end{nothmc}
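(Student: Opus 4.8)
The plan is to exhibit an association scheme whose associated hypergroup, via Theorem~\ref{asshyp}, is $(K\Gamma,+)$. By the classical result recalled above, choose a field $F$ carrying a non-Archimedean valuation $\nu\colon F\to\Gamma'=\Gamma\cup\{\infty\}$ with value group $\Gamma$ and satisfying the triangle condition. Take the point set to be $X=F$, and for each $r\in\Gamma'$ put
\[
\sigma_r=\{(a,b)\in F\times F\mid \nu(a-b)=r\},\qquad S=\{\sigma_r\mid r\in\Gamma'\}.
\]
Each $\sigma_r$ is nonempty because $\Gamma$ is the value group, so $r\mapsto\sigma_r$ is a bijection $\Gamma'\to S$; this is the identification under which I will match $S$ with $K\Gamma$, the element $\sigma_\infty$ corresponding to the identity of $K\Gamma$.

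First I would check that $(X,S)$ is an association scheme. That $S$ partitions $X\times X$ is immediate; $\sigma_\infty=1_X$ since $\nu(x)=\infty\iff x=0$; and $\sigma_r^{\ast}=\sigma_r$ because $\nu(-x)=\nu(x)$, so the scheme is symmetric, matching commutativity of $K\Gamma$. The substantive point is that the structure constant $a_{\sigma_r\sigma_s}^{\sigma_t}=\#\{y\in F\mid \nu(a-y)=r,\ \nu(y-b)=s\}$ depends only on $r,s,t$ and not on the choice of $(a,b)\in\sigma_t$. Writing $a-b=(a-y)+(y-b)$, the ultrametric inequality already forces $t\ge\min(r,s)$, with equality when $r\neq s$; and in every case other than $r=s=t$ (finite) one of the two conditions on $y$ becomes automatic, so that the set in question is, after a translation of $F$, one of $\{v\in F\mid\nu(v)=\min(r,s)\}$, $\{v\in F\mid\nu(v)=r\}$, or $\emptyset$ (the last when $t<\min(r,s)$), whose cardinality is visibly independent of $(a,b)$. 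The single remaining case $r=s=t$ is exactly what the triangle condition supplies: the set $\{y\mid\nu(a-y)=\nu(y-b)=r\}$ is nonempty and of $(a,b)$-independent cardinality whenever $\nu(a-b)=r$. Hence all structure constants are well defined and $(X,S)$ is an association scheme.

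It remains to identify the hypergroup of $S$ with $(K\Gamma,+)$. Since the hyperoperation on the hypergroup of $S$ sends $(\sigma_r,\sigma_s)$ to $\{\sigma_t\mid a_{\sigma_r\sigma_s}^{\sigma_t}\neq 0\}$, the computations above yield $\sigma_r\cdot\sigma_s=\{\sigma_{\min(r,s)}\}$ for $r\neq s$ and $\sigma_r\cdot\sigma_r=\{\sigma_t\mid t\ge r\}$, where for $t>r$ a witness is $y=a-u$ with $\nu(u)=r$, and for $t=r$ nonemptiness is the triangle condition. This is precisely the hyperaddition of $K\Gamma$ (which, by construction, records the valuations of sums) under the identification $r\mapsto\sigma_r$, so that map is an isomorphism of hypergroups identifying $K\Gamma$ with the hypergroup of $S$, and $(K\Gamma,+)$ is therefore realizable.

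I expect the main obstacle to be the structure-constant verification, and in particular the observation that the triangle condition --- stated only for $r=s=t$ --- really does suffice, every other configuration being rigidly pinned down by the ultrametric inequality. Some care is also needed to phrase the cardinality arguments so that they remain valid when the ``spheres'' $\{v\in F\mid\nu(v)=r\}$ are infinite (so that $(X,S)$ is an infinite association scheme, consistent with the conclusion being realizability rather than finite realizability), and to use all of $F$ --- rather than its valuation ring --- as the point set, since $K\Gamma$ is built from the whole of $\Gamma$.
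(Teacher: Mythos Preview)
Your proposal is correct and follows essentially the same route as the paper: define the scheme on $X=F$ by $\sigma_r=\{(a,b)\mid\nu(a-b)=r\}$, verify the structure constants by a case analysis driven by the ultrametric inequality (with the triangle condition handling $r=s=t$), and then read off that the complex multiplication matches the hyperaddition of $K\Gamma$. One small slip: in the case $r\neq s$ with $t=\min(r,s)$, the surviving condition is $\nu(y-b)=\max(r,s)$ (not $\min$), so the translated set is the sphere of radius $\max(r,s)$; this does not affect your argument, since the point is only that the cardinality is independent of $(a,b)$.
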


Note that linearly ordered hypergroups (or linearly ordered hyperfields) have been mainly used in tropical geometry for algebraic foundation of tropical geometry. See, \cite{viro} or \cite{jun2015algebraic} for more details. \\

\textbf{Acknowledgment}
\vspace{0.1cm}

This paper was initiated by the conversation with Paul-Hermann Zieschang. The author thanks to him for explaining how one can attach an association scheme to projective geometry by using flags and also for helpful answers for the author's questions on association schemes. The author also thanks to Christopher French for very helpful conversations which helped the author greatly to shape the paper as well as various comments on the first draft. Finally, the author thanks to the referee for useful comments. 

\section{Basic Definitions and Properties}
\subsection{Association schemes}
In this subsection, we recall the definition of association schemes and provide some examples. 
\begin{mydef}
Let $X$ be a nonempty set.
\begin{enumerate}
\item
$1_X$ is the diagonal of $X\times X$; $1_X:=\{(x,x)\mid x \in X\}$.
\item
For each subset $p$ of $X\times X$, we define $p^*:=\{(a,b)\mid (b,a) \in p\}$.
\item
For $x \in X$ and $p \subseteq X \times X$, we define $xp:=\{y \in X \mid (x,y) \in p\}$.
\end{enumerate}
\end{mydef} 

\begin{mydef}
Let $X$ be a nonempty set. We fix a partition $S$ of $X\times X$ and assume that $1_X \in S$. Furthermore, we assume that if $p \in S$, then $p^* \in S$. The set $S$ is an \textbf{association scheme} on a set $X$ if $S$ satisfies the following additional condition: $\forall p,q,r \in S$,
\begin{equation}\label{scheme}
\exists \textrm{ a cardinal number }a_{pq}^r \textrm{ such that } |yp \cap zq^*|=a_{pq}^r \quad \forall y \in X, z\in yr.
\end{equation}
For any three elements $p,q$, and $r$ in $S$, the cardinal number $a_{pq}^r$ is called the structure constant defined by $p,q,$ and $r$. When the underlying set $X$ is finite, we call $(X,S)$ a \textbf{finite association scheme}. 
\end{mydef}
\begin{rmk}
In general, the structure constants $\{a_{pq}^r\}$ of an association scheme $(X,S)$ do not have to be finite. However, when $(X,S)$ is a finite association scheme, $S$ is necessarily a finite set. Also, since $a_{pq}^r$ counts the number of elements in a set, if $a_{pq}^r \neq 0$, then $1\leq a_{pq}^r$. 
\end{rmk}

Let $S$ be an association scheme (on a set $X$). For nonempty subsets $P$ and $Q$ of $S$, one defines the \textbf{complex multiplication} $PQ$ of $P$ and $Q$ as follows:
\begin{equation}\label{complexmultiplication}
PQ:=\{r \in S \mid \exists p \in P,q \in Q \textrm { such that } 1 \leq a_{pq}^r\}.
\end{equation}
In case when $P=\{p\}$ and $Q=\{q\}$, we denote $PQ:=pq$. In particular, for any $p,q \in S$, we have $pq \neq \emptyset$ (it follows from \cite[Lemma 1.1.3]{zieschang2006theory}).
\begin{mydef}
Let $S$ be an association scheme on a set $X$. $S$ is said to be \textbf{commutative} if $a_{pq}^r=a_{qp}^r$ for all $p,q,r \in S$.
\end{mydef}

An association scheme (equipped with complex multiplication) generalizes a group in the following sense.

\begin{myeg}\label{groupcoress}$($\textbf{Group Correspondence}$)$\\
Let $G$ be a finite group. One can associate an association scheme to $G$ in the following way: Let $X=G$ be the underlying set. The pairs $(a,b),(c,d) \in X \times X$ are in the same cell if $ab^{-1}=cd^{-1}$ as elements of the group $G$. Then we have an association scheme $S=\{[g] \mid g \in G\}$, where $[g]:=\{(a,b) \in X \times X \mid ab^{-1}=g\}$. One can easily check that the structure constants are given by: for $[g],[h],[t] \in S$,
\[
a_{[g][h]}^{[t]} =\left\{ \begin{array}{ll}
1 & \textrm{if $t=gh$},\\
0& \textrm{if $t\neq gh$}.
\end{array} \right.
\] 
In fact, the group correspondence is valid for any group $G$ which is not necessarily finite (see \cite{zieschang2006theory}). We will generalize this construction in $\S \ref{partition}$. 
\end{myeg}
\begin{rmk}
The construction of Example \ref{groupcoress} is called the group correspondence since one can retrieve the original group from the associated association scheme by the thin radical construction. For more details, see \cite{zieschang2006theory} or \cite{french2012functors}.  
\end{rmk}

\begin{myeg}
Let $G$ be a connected, regular graph, i.e., every vertex has the same valency (regular) and there is a path between any two vertices (connected). Let $d$ the diameter of $G$ and $d(v,w)$ be the distance between two vertices $v,w \in V(G)$. Recall that $G$ is distance-regular if for any vertices $x,y \in V(G)$ and $0\leq i,j \leq d$, the cardinality of the following set:
\[
R(x,y,i,j):=\{r \in V(G) \mid d(x,r)=i,\textrm{ }d(y,r)=j\}
\]
only depends on $i,j,$ and $d(x,y)$. One can attach an association scheme to $G$ by defining that the underlying set is the set $V(G)$ of vertices of $G$ and $(v,w),(v',w') \in V(G) \times V(G)$ are in the same cell of a partition if and only if $d(v,w)=d(v',w')$. 
\end{myeg}

The following example is a prototype displaying the correspondence between buildings in the sense of Tits and Coxeter schemes in the sense of \cite[\S 12.3]{zieschang1996algebraic}.

\begin{myeg}$($\textbf{Fano plane}$)$\label{fanoexample}\\
Let $\mathbb{P}^2(\mathbb{F}_2)$ be the projective plane over the field $\mathbb{F}_2$ with two elements (Fano plane). Consider the set $X$ of all flags of $\mathbb{P}^2(\mathbb{F}_2)$. To be precise, we have
\[
X=\{(p,l)\mid p \in \mathbb{P}^2(\mathbb{F}_2),\textrm{ $l$ is a line in $\mathbb{P}^2(\mathbb{F}_2)$ which contains $p$}\}.
\]
We define the following subsets of $X \times X$:
\begin{enumerate}
\item
$R_0:=\{(x,x) \mid x \in X\}$.
\item
$R_1:=\{(x,y) \mid x=(p,l) \textrm{ and } y=(q,l) \textrm{ such that } p\neq q\}$.
\item
$R_2:=\{(x,y) \mid x=(p,l) \textrm{ and } y=(p,l') \textrm{ such that } l\neq l'\}$.
\item
$R_3:=\{(x,y) \mid \exists z \in X \textrm{ such that } (x,z) \in R_1 \textrm{ and } (z,y) \in R_2\}$
\item
$R_4:=\{(x,y) \mid \exists z \in X \textrm{ such that } (x,z) \in R_2 \textrm{ and } (z,y) \in R_1\}$
\item
$R_5:=\{(x,y) \mid \exists z,w \in X \textrm{ such that } (x,z) \in R_1, (z,w) \in R_2, \textrm{ and } (w,y) \in R_1\}$
\end{enumerate}
Then the collection $\mathcal{R}=\{R_0,...,R_5\}$ becomes an association scheme on the set $X$. One can easily check that the association scheme is non-commutative. 
\end{myeg}

Next, we recall the definition of morphisms and admissible morphisms. For these morphisms, we restrict ourselves to finite association schemes. 

\begin{mydef}(\cite[Definition 3.1]{french2012functors})\label{mor}\\
Let $X$ and $Y$ be finite sets. Let $S$ and $T$ be association schemes on $X$ and $Y$ respectively.
\begin{enumerate}
\item
A morphism from $S$ to $T$ is a function $f: X \cup S \longrightarrow Y \cup T$ such that $f(X) \subseteq Y$, $f(S) \subseteq T$ and satisfies the following condition:
\begin{equation}\label{morphcondi}
(f(x),f(y)) \in f(p),\quad \forall (x,y) \in p,\textrm{ } p \in S.
\end{equation}
\item
A morphism $f$ from $S$ to $T$ is admissible if $x \in X$, $y \in Y$, and $p \in S$ such that $(f(x),y) \in f(p)$, then there exists $z \in X$ such that $(x,z) \in p$ and $f(z)=y$. 
\end{enumerate}
\end{mydef}

\begin{rmk}
In \cite{zieschang2006theory}, Zieschang refined the definition of morphisms by introducing homomorphisms; however, in general, the composition of two homomorphisms fails to be a homomorphism and hence one can not define the category of association schemes with homomorphisms. On the other hand, with admissible morphisms, French constructed the category $\asschmfinite$ of finite association schemes. The category $\asschmfinite$ fulfills nice properties, for instance, any morphism in $\asschmfinite$ can be factored into the composition of an injection and a surjection. For more details about the category $\asschmfinite$, we refer the readers to \cite[\S 3]{french2012functors}.
\end{rmk}

\subsection{Hypergroups}\label{abs}
In this subsection, we present the definition of hypergroups and also provide basic examples. We refer the readers to \cite{corsini2003applications} for introduction to hypergroups, in particular, for geometric aspects. \\

Let $H$ be a nonempty set. By a hyperoperation $*$ on $H$ we mean a function 
\[*: H\times H \longrightarrow P^*(H),\] 
where $P^*(H)$ is the set of nonempty subsets of $H$. For any nonempty subsets $X,Y \subseteq H$, we use the notation 
\[X*Y:=\bigcup_{x \in X, y \in Y}x*y\] 
to denote the `product' of two subsets.\\ 
A nonempty set $H$ equipped with a hyperoperation $*$ is called a hypergroup if $(H,*)$ satisfies the following conditions:
\begin{enumerate}
\item
associativity: $(a*b)*c=a*(b*c)$.
\item
identity: $\exists ! e \in H$ such that $e*x=x*e=x$ $\forall x \in H$.
\item
inverse: $\forall f \in H$ $\exists ! g(:=f^{-1}) \in H$ such that $e \in (g*f) \cap (f*g)$.
\item 
reversibility: $c \in a*b \textrm{ implies that } a \in c*b^{-1} \textrm{ and } b \in a^{-1}*c, \quad \forall a,b,c \in H.$
\end{enumerate} 
For a hypergroup $H$, if $a*b=b*a$ for any $a,b \in H$, then $H$ is called a commutative hypergroup. 

\begin{myeg}\label{krasnerasgroup}
Let $\mathbf{K}:=\{0,1\}$ be a two point set. One imposes a commutative hyperoperation $+$ as follows:
\[
0+0=0, \quad 1+0=1, \quad 1+1=\{0,1\}. 
\]
Then $\mathbf{K}$ becomes a hypergroup. 
\end{myeg}
\begin{myeg}\label{signhypergroup}
Let $\mathbf{S}:=\{-1,0,1\}$ be a three point set. One imposes a commutative hyperoperation $+$ on $\mathbf{S}$ following the rule of signs, i.e., 
\[
0+0=0, \quad 1+1=1, \quad (-1)+(-1)=-1, \quad 1+(-1)=\{-1,0,1\}.
\]
Then $\mathbf{S}$ becomes a hypergroup.
\end{myeg}

\begin{myeg}
Let $G$ be a finite group and $P$ be the subgroup of the group $\Aut(G)$ of automorphisms of $G$ which consists of inner automorphisms. Let $X:=\{[g] \mid g \in G\}$ be the set of orbits in $G$ under $P$. One imposes a hyperoperation $*$ on $X$ as follows: for $[a],[b] \in X$,
\[
[a]*[b]:=\{[c] \mid c=g_1(a)g_2(b)\textrm{ for some }g_1,g_2 \in P\}.
\] 
Then $X$ becomes a hypergroup (see \cite{campaigne1940partition}). This example will be considered in more general setting in $\S \ref{partition}$. 
\end{myeg}


\begin{myeg}\label{tropicalhyper}
Let $S:=\mathbb{R}\cup \{\infty\}$ be a totally ordered set, where $\mathbb{R}$ is the set of real numbers with usual order and $\infty$ as the greatest element. One imposes the following commutative hyperoperation $+$ on $S$: for $x,y \in S$,
\[
x+ y =\left\{ \begin{array}{ll}
\min\{x,y\} & \textrm{if $x\neq y$},\\
\left[x,\infty\right]& \textrm{if $x=y$}.
\end{array} \right.
\]
Then $(S,+)$ becomes a hypergroup (see \cite{viro}). This example will be considered in $\S \ref{linear}$. 
\end{myeg}

We show that the above examples can be realized as association schemes in \S \ref{examples} in more general setting. Next, we recall the definition of homomorphism between hypergroups.

\begin{mydef}
A \textbf{homomorphism} of hypergroups $(H_1,*_1)$, $(H_2,*_2)$ is a function $f:H_1 \to H_2$ such that 
\[f(a*_1b) \subseteq f(a)*_2f(b) \textrm{ for all } a,b \in H_1.\]
We call $f$ is \textbf{strict} if 
\[f(a*_1b) = f(a)*_2f(b) \textrm{ for all } a,b \in H_1.\]
We let $\hypergroup$ be the category of hypergroups and $\hypergroupfinite$ be the category of finite hypergroups. 
\end{mydef}
\begin{rmk}\label{quo}
One can easily obtain hypergroups from the classical objects via `quotient' construction. See $\S \ref{quoquo}$ for details. We also note that Connes and Consani consider this construction as the scalar extension functor from the category of commutative rings to the category of hyperrings (see,\cite{con4}).
\end{rmk}

\begin{mydef}
Let $H$ be a hypergroup. A hypergroup $L$ is a hypergroup extension of $H$ if there exists an injective hypergroup homomorphism from $H$ to $L$. Also, a subset $K$ of $H$ is called a sub-hypergroup if it is a hypergroup with respect to the hyperoperation which one obtains by restricting the domain of the hyperoperation of $H$ to $K\times K$ and its codomain to $P^*(K)$, the set of nonempty subsets of $K$. 
\end{mydef}

\begin{myeg}
The hypergroup in Example \ref{tropicalhyper} is a hypergroup extension of the hypergroup $\mathbf{K}=\{0,1\}$ in Example \ref{krasnerasgroup} via the injection sending $0$ to $\infty$ and $1$ to $1$. 
\end{myeg}

\section{A Functor from Associations schemes to Hypergroups} \label{functor}
In what follows for each association scheme $S$, we let $\mathbf{H}(S)$ be the hypergroup associated to $S$ as in \cite{zieschangmax}. Recall that $\asschmfinite$ is the category of finite association schemes with admissible morphisms and $\hypergroupfinite$ is the category of finite hypergroups. The following proposition shows that $\mathbf{H}(-)$ is indeed a functor from $\asschmfinite$ to $\hypergroupfinite$. All sets are assumed to be nonempty unless otherwise stated. 

\begin{pro}\label{fun}
Let $X$ and $Y$ be finite sets and $S$ and $T$ be association schemes on $X$ and $Y$ respectively. Let $\varphi:S\longrightarrow T$ be an admissible morphism of association schemes from $S$ to $T$ and $\mathbf{H}(\varphi):S\longrightarrow T$ be the restriction of $\varphi$ to $S$. Then
\[\mathbf{H}(\varphi):\mathbf{H}(S) \longrightarrow \mathbf{H}(T)\] 
is a homomorphism of hypergroups. Furthermore, for $\varphi\circ \psi$, we have $\mathbf{H}(\varphi\circ \psi)=\mathbf{H}(\varphi)\circ \mathbf{H}(\psi)$.
\end{pro}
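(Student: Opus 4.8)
The plan is to verify the two assertions separately: first that $\mathbf{H}(\varphi)$ is a hypergroup homomorphism, and then that $\mathbf{H}(-)$ respects composition. For the first part, recall that the hyperoperation on $\mathbf{H}(S)$ is the complex multiplication of \eqref{complexmultiplication}: for $p,q\in S$, $p*q=\{r\in S\mid a_{pq}^r\geq 1\}$. So I must show that for all $p,q\in S$,
\[
\mathbf{H}(\varphi)(p*q)=\varphi(p*q)\subseteq \varphi(p)*\varphi(q),
\]
i.e.\ that whenever $r\in S$ satisfies $a_{pq}^r\geq 1$, the image $\varphi(r)\in T$ satisfies $a_{\varphi(p)\varphi(q)}^{\varphi(r)}\geq 1$. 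The key is to translate $a_{pq}^r\geq 1$ into the existence of an actual configuration of points: $a_{pq}^r\geq 1$ means there exist $y,z\in X$ with $z\in yr$ and $yp\cap zq^*\neq\emptyset$, say $w\in yp\cap zq^*$. Unpacking, $(y,w)\in p$, $(w,z)\in q$ (since $(z,w)\in q^*$), and $(y,z)\in r$. Now apply the morphism condition \eqref{morphcondi} three times: $(\varphi(y),\varphi(w))\in\varphi(p)$, $(\varphi(w),\varphi(z))\in\varphi(q)$, and $(\varphi(y),\varphi(z))\in\varphi(r)$. This exhibits $\varphi(w)\in \varphi(y)\varphi(p)\cap \varphi(z)\varphi(q)^*$ with $\varphi(z)\in\varphi(y)\varphi(r)$, hence $a_{\varphi(p)\varphi(q)}^{\varphi(r)}\geq 1$, so $\varphi(r)\in\varphi(p)*\varphi(q)$ as desired. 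Note this direction only uses that $\varphi$ is a morphism; admissibility is not needed here (it would be needed to show strictness, which is not claimed).

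I should also check that $\mathbf{H}(\varphi)$ sends the identity $1_X$ to the identity $1_Y$; this follows because a morphism of association schemes necessarily sends $1_X$ to $1_Y$ — indeed $(\varphi(x),\varphi(x))\in\varphi(1_X)$ for every $x\in X$, and since $Y=\bigcup_{x}\{\varphi(x)\}$ is covered and $\varphi(1_X)$ is a single element of $T$ meeting the diagonal of $Y$, the partition axiom forces $\varphi(1_X)=1_Y$ (one can cite \cite{french2012functors} for this standard fact about morphisms). Strictly speaking the definition of hypergroup homomorphism in the paper only requires $f(a*b)\subseteq f(a)*f(b)$, so even this identity check may be folded into the cited framework; I would include a one-line remark.

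For the second assertion, this is essentially formal. Given admissible morphisms $\psi:S\to S'$ and $\varphi:S'\to T$, by definition $\mathbf{H}(\varphi)$, $\mathbf{H}(\psi)$, $\mathbf{H}(\varphi\circ\psi)$ are just the restrictions of $\varphi$, $\psi$, $\varphi\circ\psi$ to the $S$-components, and the composite $\varphi\circ\psi$ restricted to $S$ equals $(\varphi|_{S'})\circ(\psi|_S)$ since $\psi(S)\subseteq S'$. Hence $\mathbf{H}(\varphi\circ\psi)=\mathbf{H}(\varphi)\circ\mathbf{H}(\psi)$ as set maps $S\to T$, and we have just shown each is a hypergroup homomorphism. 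I would also note in passing that $\mathbf{H}(\mathrm{id}_S)=\mathrm{id}_{\mathbf{H}(S)}$, completing the functoriality statement.

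The only mildly delicate point — and the one I expect to be the main obstacle — is the bookkeeping in unpacking the inequality $a_{pq}^r\geq 1$ into a point-configuration and keeping the roles of $p,q,q^*,r$ and the starred conditions straight (in particular remembering that $z\in zq^*$ translates to $(z,w)\in q^*$ i.e.\ $(w,z)\in q$). Once the correct configuration $(y,w)\in p$, $(w,z)\in q$, $(y,z)\in r$ is written down, the application of the morphism condition is immediate and the rest is formal.
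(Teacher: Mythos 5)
Your proposal is correct and follows essentially the same route as the paper: unpack $a_{pq}^r\geq 1$ into a witness configuration $(y,w)\in p$, $(w,z)\in q$, $(y,z)\in r$, push it forward with the morphism condition \eqref{morphcondi}, and treat composition as a formality of restrictions (the paper likewise notes admissibility is not used). One cosmetic slip: your identity check does not need, and should not assert, that $\varphi(X)$ covers $Y$ --- it suffices that $(\varphi(x),\varphi(x))$ lies in both $\varphi(1_X)$ and $1_Y$, which are cells of the partition $T$ and hence equal.
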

\begin{proof}
Let $\star$ be complex multiplications of association schemes. Since $\mathbf{H}(\varphi)$ maps $1_X$ to $1_Y$ we only have to show that for any $p,q \in S$, $\varphi(p\star q)\subseteq \varphi(p)\star \varphi(q)$. Let $r \in p\star q$. This means that for any $(x,z) \in r$,
\begin{equation}\label{constant}
a_{pq}^r=|\{y \in X \mid (x,y)\in p, (y,z)\in q\}| \neq 0.
\end{equation}
Therefore, for each $(x,z) \in r$, there exist at least one $y \in X$ such that $(x,y) \in p$ and $(y,z) \in q$. Now, we want to show that $\varphi(r) \in \varphi(p)\star \varphi(q)$. But, since $(\varphi(x), \varphi(z)) \in \varphi(r)$, for $y$ as in \eqref{constant}, we have
\[
(\varphi(x),\varphi(y)) \in \varphi(p), \quad (\varphi(y),\varphi(z))\in \varphi(q).
\]
The second assertion is clear since $\mathbf{H}(\varphi)$ is just a restriction of $\varphi$.
\end{proof}

\begin{cor}
$\mathbf{H}(-):\asschmfinite \longrightarrow \hypergroupfinite$ is a functor.
\end{cor}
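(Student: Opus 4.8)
The plan is simply to assemble the pieces already in place. First I would check that $\mathbf{H}(-)$ is well defined on objects: given a finite association scheme $(X,S)$, the set $S$ is finite, since each cell is a nonempty subset of the finite set $X\times X$ and $S$ partitions $X\times X$; and by Theorem \ref{asshyp} the pair $(S,\star)$, where $\star$ denotes complex multiplication, is a hypergroup with identity $1_X$. Hence $\mathbf{H}(S)$ is an object of $\hypergroupfinite$, and the object assignment lands in the right category.

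Next, on morphisms: given an admissible morphism $\varphi\colon S\longrightarrow T$ in $\asschmfinite$, Proposition \ref{fun} already shows that its restriction $\mathbf{H}(\varphi)\colon \mathbf{H}(S)\longrightarrow\mathbf{H}(T)$ is a homomorphism of hypergroups, so $\mathbf{H}(\varphi)$ is a morphism in $\hypergroupfinite$; again the morphism assignment lands correctly.

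It then remains to verify the two functoriality axioms. Preservation of composition, $\mathbf{H}(\varphi\circ\psi)=\mathbf{H}(\varphi)\circ\mathbf{H}(\psi)$, is precisely the second assertion of Proposition \ref{fun}. Preservation of identities, $\mathbf{H}(\mathrm{id}_{(X,S)})=\mathrm{id}_{\mathbf{H}(S)}$, follows immediately from the fact that $\mathbf{H}(-)$ acts on morphisms by restriction: the identity morphism of $(X,S)$ in $\asschmfinite$ is the identity function on $X\cup S$, whose restriction to $S$ is the identity map on $S=\mathbf{H}(S)$.

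I do not anticipate any genuine obstacle here; the substantive content is entirely carried by Theorem \ref{asshyp} and Proposition \ref{fun}. The only points requiring a moment's care are the bookkeeping observations that the object and morphism assignments genuinely land in $\hypergroupfinite$ (finiteness of $S$) and the explicit identification of the identity morphisms on both sides — both of which are routine.
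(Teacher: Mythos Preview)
Your proposal is correct and follows essentially the same approach as the paper: the paper's proof is the single sentence ``It directly follows from Proposition \ref{fun},'' and you have simply spelled out the routine bookkeeping (finiteness of $S$, preservation of identities) that this sentence implicitly contains. There is nothing substantively different here.
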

\begin{proof}
It directly follows from Proposition \ref{fun}.
\end{proof}

\begin{rmk}
In Proposition \ref{fun}, we did not use the property of admissible morphisms. Therefore, the same statement is true when $\varphi$ is just a morphism (see, Definition \ref{mor}).
\end{rmk}

Let's recall some definitions.

\begin{mydef}$($cf. \cite{zieschang2006theory} for association schemes and \cite{zieschangmax} for hypergroups$)$\label{normalcondition}\\
Let $S$ be an association scheme on a set $X$ and $H$ be a hypergroup.  
\begin{enumerate}
\item
A closed subset $T$ of an association scheme $(X,S)$ is a subset $T$ of $S$ such that $T^*T \subseteq T$. $T$ is normal if $sT=Ts$ for all $s \in S$. $T$ is strongly normal if $T=s^*Ts$ for all $s \in S$. 
\item
A sub-hypergroup $L$ of $H$ is normal if $hL=Lh$ for all $h \in H$. $L$ is strongly normal if $L=h^{-1}Lh$ for all $h \in H$. 
\end{enumerate}
\end{mydef}

\begin{rmk}\label{quoremark}
With closed subsets and normal sub-hypergroups one can generalize the quotient construction of groups. For details, see the aforementioned references. 
\end{rmk}

\begin{pro}\label{normalproposition}
Let $S$ be an association scheme on a set $X$ and $T$ be a closed subset of $S$. 
\begin{enumerate}
\item
$\mathbf{H}(T)$ is a sub-hypergroup of $\mathbf{H}(S)$. 
\item 
If $T$ is (strongly) normal, then $\mathbf{H}(T)$ is a (strongly) normal sub-hypergroup of $\mathbf{H}(S)$.
\end{enumerate}
\end{pro}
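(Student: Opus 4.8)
The plan is to show that $\mathbf{H}(T) = T$ is closed under the hyperoperation of $\mathbf{H}(S)$ (namely complex multiplication restricted to $T$) and that the four hypergroup axioms hold for $(T,\star)$, and then to transfer the normality conditions. First I would observe that since $T$ is a closed subset, we have $1_X \in T$ (this is part of the standard definition of closed subset, or follows from $T^*T \subseteq T$ together with $T$ being nonempty and $p^* p \ni 1_X$) and $T^* = T$, and crucially that for $p, q \in T$ the complex product $pq$ computed in $S$ already lies entirely in $T$: indeed any $r \in pq$ satisfies $r \in p q \subseteq T^{**}T = T^*{}^*T$; more carefully, one uses that $pq \subseteq T$ whenever $p,q \in T$, which is exactly the content of $T$ being closed (see \cite[\S 2]{zieschang2006theory}). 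Hence the restriction of $\star$ to $T \times T$ indeed takes values in $P^*(T)$, so the hyperoperation on $\mathbf{H}(S)$ restricts to a genuine hyperoperation on $T$.

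Next I would verify the hypergroup axioms for $(T, \star)$. Associativity and reversibility are inherited verbatim from $\mathbf{H}(S)$ since they are universally quantified identities/implications that remain true when the variables range over the subset $T$; the identity element $1_X$ of $\mathbf{H}(S)$ lies in $T$ and still acts as a two-sided identity; and for $p \in T$ the inverse of $p$ in $\mathbf{H}(S)$ is $p^*$, which lies in $T$ because $T^* = T$. Uniqueness of identity and inverse in $T$ follows from uniqueness in $\mathbf{H}(S)$. This establishes part (1): $\mathbf{H}(T)$ is a sub-hypergroup of $\mathbf{H}(S)$ in the sense of Definition for sub-hypergroups.

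For part (2), I would unwind the definitions of normal and strongly normal on both sides. The key point is that the complex multiplication $sT$ and $Ts$ in the association scheme $S$, as subsets of $S$, coincide with the hyperproducts $s \star T$ and $T \star s$ in $\mathbf{H}(S)$ — this is essentially by construction of $\mathbf{H}(-)$, since $r \in sT$ (scheme sense) iff $1 \le a^r_{st}$ for some $t \in T$ iff $r \in s \star t$ for some $t \in T$. Given this dictionary, the condition $sT = Ts$ for all $s \in S$ is literally the condition $h L = L h$ for all $h \in \mathbf{H}(S)$ with $L = \mathbf{H}(T)$, since the elements $h$ of $\mathbf{H}(S)$ are precisely the elements $s$ of $S$; similarly $T = s^* T s$ translates to $L = h^{-1} L h$ using that the inverse of $s$ in $\mathbf{H}(S)$ is $s^*$. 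So normality (resp. strong normality) of $T$ as a closed subset is the same statement as normality (resp. strong normality) of $\mathbf{H}(T)$ as a sub-hypergroup, and the proposition follows.

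The main obstacle I anticipate is entirely bookkeeping rather than mathematical depth: one must be careful that "closed subset" $T^* T \subseteq T$ genuinely forces $pq \subseteq T$ for all $p, q \in T$ (not merely $p^* q \subseteq T$), which requires knowing $T^* = T$ — and that in turn must be extracted from the standard theory of closed subsets, e.g. using that $p \in pp^*p$ forces things to close up, or simply citing that for a closed subset $T$ one has $1_X \in T$ and $T = T^*$. Provided that lemma is in hand, every remaining step is a routine check that universally quantified axioms restrict to subsets and that the $\mathbf{H}(-)$ construction carries complex multiplication to the hyperoperation verbatim; no genuinely hard estimate or construction is needed.
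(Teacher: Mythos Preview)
Your proposal is correct and follows essentially the same approach as the paper: the paper's proof of (1) simply cites that a closed subset $T$ satisfies $1_X \in T$ and $t^* \in T$ for all $t \in T$ (referring to \cite[\S 2]{zieschang2006theory}), and for (2) states that it is clear from the definition. Your write-up is considerably more detailed but unpacks exactly these facts and the same dictionary between complex multiplication in $S$ and the hyperoperation on $\mathbf{H}(S)$.
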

\begin{proof}
$(1)$ directly follows from the fact that if $T$ is a closed subset then $1_X \in T$ and $t^* \in T$ for all $t \in T$ (see \cite[\S 2]{zieschang2006theory}). $(2)$ is clear from the definition. 
\end{proof}

\section{Realizable hypergroups}
In the first subsection, we investigate basic properties of hypergroups which arise from association schemes. In the second subsection, we define a notion of congruence relations for hypergroups and then we study how this can be related to realizable hypergroups. 
\subsection{Basic properties of realizable hypergroups}
In this section, we investigate which classes of hypergroups can be seen as association schemes with complex multiplication.
\begin{mydef}
By a realizable hypergroup, we mean a hypergroup $H$ which can be obtained from an associations scheme, i.e., the essential image of the functor $\mathbf{H}$ in \S \ref{functor}. In particular, if a hypergroup $H$ can be obtained from a finite association scheme, we say that $H$ is a finitely realizable hypergroup.  
\end{mydef}



\begin{myeg}
Since hypergroups generalize the notion of groups, it follows from the group correspondence (Example \ref{groupcoress}) that all groups are realizable hypergroups.  
\end{myeg}


In what follows, by $(X,S)$ we will mean an association scheme $S$ on a set $X$ unless otherwise stated. For association schemes $(X_1,S_1)$ and $(X_2,S_2)$ one can define the product (which is again an association scheme on $X_1\times X_2)$ of $(X_1,S_1)$ and $(X_2,S_2)$ (cf. \cite[\S 7]{french2012functors}, \cite{zieschang2006theory}). We use the same notation as in \cite{french2012functors}. In particular, following \cite{french2012functors}, we denote the product of $(X_1,S_1)$ and $(X_2,S_2)$ by $S_1\boxtimes S_2$.

\begin{lem}\label{product}
Let $(X_1,S_1)$ and $(X_2,S_2)$ be association schemes. Suppose that $S_1 \boxtimes S_2$ is the product of $(X_1,S_1)$ and $(X_2,S_2)$. Then the following canonical map:
\[\mathbf{H}(S_1)\times \mathbf{H}(S_2) \longrightarrow \mathbf{H}(S_1\boxtimes S_2), \quad (a,b) \mapsto \zeta_{X_1,X_2}(a,b):=[a,b]
\]
is a strict homomorphism of hypergroups.
\end{lem}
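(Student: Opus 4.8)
The plan is to unwind the definition of the product association scheme $S_1 \boxtimes S_2$ and the hyperoperation on $\mathbf{H}(S_1 \boxtimes S_2)$, then verify directly that the map $\zeta = \zeta_{X_1,X_2}$ respects identities and hyperproducts with equality. Recall that the underlying set of $S_1 \boxtimes S_2$ consists of the cells $[p,q] := \{((x_1,x_2),(y_1,y_2)) \mid (x_1,y_1)\in p, (x_2,y_2)\in q\}$ for $p \in S_1$, $q \in S_2$, and these form a partition of $(X_1\times X_2)\times(X_1\times X_2)$; in particular $1_{X_1\times X_2} = [1_{X_1},1_{X_2}]$, so $\zeta$ sends the identity of $\mathbf{H}(S_1)\times\mathbf{H}(S_2)$ to the identity of $\mathbf{H}(S_1\boxtimes S_2)$. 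First I would record the key compatibility of structure constants: for $p,p',r \in S_1$ and $q,q',s\in S_2$ one has $a_{[p,q][p',q']}^{[r,s]} = a_{pp'}^r \cdot a_{qq'}^s$, because a witness $(y_1,y_2)$ for the triple $([p,q],[p',q'],[r,s])$ sitting between $(x_1,x_2)$ and $(z_1,z_2)$ is exactly a pair consisting of a $p,p'$-witness $y_1$ between $x_1,z_1$ and a $q,q'$-witness $y_2$ between $x_2,z_2$. This product formula is presumably available from the cited references on the product scheme, but it is short enough to note in passing.

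Given the product formula, the argument is essentially bookkeeping. Fix $(a,b),(a',b') \in \mathbf{H}(S_1)\times\mathbf{H}(S_2)$, so $a,a',r \in S_1$ and $b,b',s\in S_2$. By definition of the product hyperoperation, $(a,b)*(a',b') = \{(r,s) \mid r \in a\star_1 a',\ s\in b\star_2 b'\}$, where $\star_i$ is complex multiplication in $S_i$; this is just the product of hypergroups $\mathbf{H}(S_1)\times\mathbf{H}(S_2)$ read off cell-wise. On the other side, $\zeta(a,b) \bullet \zeta(a',b') = [a,b]\bullet[a',b'] = \{[r,s] \mid a_{[a,b][a',b']}^{[r,s]} \geq 1\}$, where $\bullet$ is complex multiplication in $S_1\boxtimes S_2$. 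By the product formula, $a_{[a,b][a',b']}^{[r,s]} \geq 1$ if and only if $a_{aa'}^r \geq 1$ and $a_{bb'}^s \geq 1$, i.e.\ if and only if $r \in a\star_1 a'$ and $s \in b\star_2 b'$. Hence $\zeta$ maps $(a,b)*(a',b')$ bijectively onto $\zeta(a,b)\bullet\zeta(a',b')$, which in particular gives the required inclusion $\zeta((a,b)*(a',b')) = \zeta(a,b)\bullet\zeta(a',b')$, so $\zeta$ is a strict homomorphism. (One should also note $\zeta$ is well defined and injective as a map of sets, which is immediate since $[a,b]=[a',b']$ forces $a=a'$ and $b=b'$ by disjointness of the cells of the two partitions, though injectivity is not needed for the statement as phrased.)

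The only genuine point requiring care is the structure-constant product formula $a_{[p,q][p',q']}^{[r,s]} = a_{pp'}^r\, a_{qq'}^s$, and in particular checking that the count is independent of the chosen base pair $((x_1,x_2),(z_1,z_2)) \in [r,s]$ — but this independence is inherited coordinatewise from the corresponding independence in $S_1$ and $S_2$, so it reduces to the defining property \eqref{scheme} of each factor. If this formula is taken as known from \cite{french2012functors} or \cite{zieschang2006theory}, the lemma is immediate; otherwise it is the one computation to spell out. I do not anticipate any real obstacle: the main ``work'' is simply translating ``$a_{pp'}^r \geq 1$ and $a_{qq'}^s \geq 1$'' into ``$a_{[p,q][p',q']}^{[r,s]} \geq 1$'' and back, and the rest is formal.
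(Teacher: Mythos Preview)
Your proposal is correct and follows essentially the same route as the paper: both reduce the strictness claim to the product formula for structure constants $a_{[p,q][p',q']}^{[r,s]} = a_{pp'}^{r}\,a_{qq'}^{s}$ (cited there as \cite[Theorem~7.2.3]{zieschang2006theory}) and then read off that $[r,s]\in[p,q]\bullet[p',q']$ iff $r\in p\star_1 p'$ and $s\in q\star_2 q'$. Your write-up is slightly more detailed (identity, injectivity, and a sketch of the product formula), but the argument is the same.
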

\begin{proof}
Let $\alpha=[p_1,p_2]$ and $\beta=[q_1,q_2]$. We should show that $\alpha * \beta =[p_1*q_2,p_2*q_2]$; however, it directly follows from \cite[Theorem 7.2.3]{zieschang2006theory}. In fact, suppose that $\gamma=[r_1,r_2] \in \alpha *\beta$. This is equivalent to $a_{\alpha\beta}^{\gamma} >0$. However, it follows from \cite[Theorem 7.2.3]{zieschang2006theory} that
\[a_{\alpha\beta}^\gamma = a_{p_1q_1}^{r_1}a_{p_2q_2}^{r_2}.\]
Therefore, $\gamma=[r_1,r_2] \in \alpha *\beta$ is equivalent to $r_1 \in p_1*q_1$ and $r_2 \in p_2*q_2$ and hence we obtain the desired result.
\end{proof}

\begin{pro} \label{productrealiz}
Let $H_1$,..., $H_n$ be realizable hypergroups. Then $H_1\times\cdots \times H_n$ is realizable.
\end{pro}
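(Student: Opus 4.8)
The plan is to reduce the general finite-product statement to the two-factor case, which is Lemma \ref{product} together with the observation that the canonical map there is not merely a strict homomorphism but an isomorphism. First I would record the one-line fact that the map $\zeta_{X_1,X_2}\colon \mathbf{H}(S_1)\times \mathbf{H}(S_2)\longrightarrow \mathbf{H}(S_1\boxtimes S_2)$ of Lemma \ref{product} is a bijection: surjectivity is immediate because every element of $S_1\boxtimes S_2$ has the form $[p_1,p_2]$ by construction of the product scheme, and injectivity follows because $[p_1,p_2]=[q_1,q_2]$ in $S_1\boxtimes S_2$ forces $p_1=q_1$ and $p_2=q_2$ as relations on $X_1$ and $X_2$ respectively. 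Being a strict bijective homomorphism, $\zeta_{X_1,X_2}$ is an isomorphism of hypergroups, so $\mathbf{H}(S_1)\times\mathbf{H}(S_2)$ is realizable whenever $\mathbf{H}(S_1)$ and $\mathbf{H}(S_2)$ are.

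Next I would run an induction on $n$. The base case $n=1$ is vacuous, and $n=2$ is the paragraph above. For the inductive step, assume $H_1\times\cdots\times H_{n-1}$ is realizable, say $H_1\times\cdots\times H_{n-1}\cong \mathbf{H}(S)$ for some association scheme $(X,S)$, and $H_n\cong\mathbf{H}(S_n)$ for some $(X_n,S_n)$. Then by the two-factor case applied to $S$ and $S_n$,
\[
H_1\times\cdots\times H_{n-1}\times H_n\;\cong\;\mathbf{H}(S)\times\mathbf{H}(S_n)\;\cong\;\mathbf{H}(S\boxtimes S_n),
\]
using that the product of hypergroups is associative (up to canonical isomorphism) so that we may regroup $(H_1\times\cdots\times H_{n-1})\times H_n$ as $H_1\times\cdots\times H_n$. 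This exhibits $H_1\times\cdots\times H_n$ as the hypergroup of the scheme $(X\times X_n,\,S\boxtimes S_n)$, completing the induction.

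I should also note the refinement relevant to Theorem A: if all the $H_i$ are \emph{finitely} realizable, then each $S_i$ (or $S$) can be taken on a finite set, and $X_1\times\cdots\times X_n$ is then finite, so the product $H_1\times\cdots\times H_n$ is finitely realizable as well. The only point requiring care is the compatibility of the iterated hypergroup product with the iterated scheme product $\boxtimes$; this is handled entirely by Lemma \ref{product} and the associativity of $\times$, so no new combinatorial computation is needed.

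There is no genuine obstacle here: the substance is already contained in Lemma \ref{product} and \cite[Theorem 7.2.3]{zieschang2006theory}. The only thing one must be slightly careful about is that Lemma \ref{product} as stated only asserts that $\zeta_{X_1,X_2}$ is a strict homomorphism, so the short verification that it is also bijective — hence an isomorphism — is the one extra ingredient, and it is essentially formal from the definition of the product scheme $S_1\boxtimes S_2$.
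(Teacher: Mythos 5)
Your proof is correct and takes essentially the same route as the paper: reduce to the case $n=2$ and upgrade the canonical strict homomorphism $\mathbf{H}(S_1)\times\mathbf{H}(S_2)\to\mathbf{H}(S_1\boxtimes S_2)$ of Lemma \ref{product} to an isomorphism. The only (harmless) difference is that you check bijectivity directly from the form of the relations $[p_1,p_2]$ in the product scheme, whereas the paper verifies that the kernel is trivial via \cite[Lemma 7.2.1]{zieschang2006theory} and then applies the first isomorphism theorem for hypergroups.
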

\begin{proof}
It is enough to show when $n=2$. Suppose that $(X_1,S_1(H_1))$ and $(X_2,S_2(H_2))$ are associations schemes which realize $H_1$ and $H_2$ respectively. Let $S_i(H_i):=S_i$ for $i=1,2$. We claim that $H_1 \times H_2$ is isomorphic to $\mathbf{H}(S_1\boxtimes S_2)$; this will show that $H_1\times H_2$ is realizable since $S_1 \boxtimes S_2$ is an association scheme on $X_1\times X_2$. Define the following map as in Lemma \ref{product}:
\[\varphi: \mathbf{H}(S_1)\times \mathbf{H}(S_2) \longrightarrow \mathbf{H}(S_1\boxtimes S_2), \quad (a,b) \mapsto \zeta_{X_1,X_2}(a,b):=[a,b].
\]
It follows from Lemma \ref{product} that $\varphi$ is strict and also $\varphi$ is clearly surjective. Finally, from \cite[Lemma 7.2.1]{zieschang2006theory}, $\Ker\varphi=\{1_{X_1\times X_2}\}$. Therefore, it follows from the first isomorphism theorem of hypergroups (cf. \cite{corsini2003applications}, \cite{Dav2}) that $\varphi$ is an isomorphism. 
 \end{proof}

\begin{rmk}
The proof of Proposition \ref{productrealiz} actually shows that if hypergroups $H_1,...,H_n$ are finitely realizable then so is their product $H_1\times\cdots \times H_n$. 
\end{rmk}

\begin{rmk}
The author learned from French that the Hamming association schemes $H(n,2)$ provide examples of realizable hypergroups which are not groups. In particular, for each $n \in \mathbb{N}$, there exists a realizable hypergroup of order $n$ which is not a group. 
\end{rmk}


Next proposition shows that being realizable is stable under taking quotients. For the definitions and properties of quotients of association schemes, we refer the readers to \cite{zieschang2006theory} or \cite[\S 2]{french2012functors}. For hypergroups, see \cite{zieschangmax}. Again we use the same notation as in \cite{french2012functors} for association schemes. 

\begin{pro}\label{quotient}
Let $H$ be a finitely realizable hypergroup and $N$ be a normal sub-hypergroup of $H$. Then, $H/N$ is finitely realizable.
\end{pro}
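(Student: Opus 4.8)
The plan is to transport the quotient construction from association schemes to hypergroups through the functor $\mathbf{H}$ of \S\ref{functor}. Since $H$ is finitely realizable, fix a finite association scheme $(X,S)$ together with an isomorphism $\mathbf{H}(S)\cong H$; under this isomorphism $N$ corresponds to a normal sub-hypergroup $L$ of $\mathbf{H}(S)$. Because the underlying set of $\mathbf{H}(S)$ is $S$ itself and its hyperoperation is complex multiplication, a sub-hypergroup of $\mathbf{H}(S)$ is precisely a subset $T\subseteq S$ with $1_X\in T$, with $t^*\in T$ for every $t\in T$, and with $TT\subseteq T$ --- that is, a closed subset of $S$ in the sense of Definition \ref{normalcondition}; moreover such an $L$ is normal as a sub-hypergroup exactly when $sT=Ts$ for all $s\in S$. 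Thus $L=\mathbf{H}(T)$ for a normal closed subset $T$ of $S$.

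Next I would bring in the quotient association scheme $S/\!\!/T$ of $(X,S)$ by the closed subset $T$ (notation as in \cite[\S 2]{french2012functors}; see also \cite{zieschang2006theory}). This is again an association scheme, defined on the finite set $X/T$ of $T$-classes, so it is a \emph{finite} association scheme. The canonical quotient map $S\to S/\!\!/T$ sending $p$ to its double coset $TpT$ (viewed as the relation it induces on $X/T$) is a morphism of association schemes in the sense of Definition \ref{mor}, so by Proposition \ref{fun} and the remark following it, it induces a homomorphism of hypergroups $\pi\colon \mathbf{H}(S)\to \mathbf{H}(S/\!\!/T)$.

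The heart of the argument is to verify that $\pi$ realizes the quotient $\mathbf{H}(S)/\mathbf{H}(T)$. Concretely I would check that $\pi$ is surjective and strict --- surjectivity is immediate from the construction, and strictness should follow from the description of complex multiplication on $S/\!\!/T$ in terms of that of $S$ (a structure-constant computation for quotient schemes, for which I would cite \cite{zieschang2006theory}), using normality of $T$ --- and that $\Ker\pi=\{p\in S\mid TpT=T\}=T=\mathbf{H}(T)=L$. Granting these, the first isomorphism theorem for hypergroups (\cite{corsini2003applications}, \cite{Dav2}), in the same form used in the proof of Proposition \ref{productrealiz}, yields $\mathbf{H}(S)/\mathbf{H}(T)\cong \mathbf{H}(S/\!\!/T)$, and hence, via the identification $\mathbf{H}(S)\cong H$ with $L\leftrightarrow N$, an isomorphism $H/N\cong \mathbf{H}(S/\!\!/T)$. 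Since $S/\!\!/T$ is a finite association scheme, $H/N$ is finitely realizable.

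I expect the main obstacle to be precisely the compatibility statement $\mathbf{H}(S/\!\!/T)\cong \mathbf{H}(S)/\mathbf{H}(T)$: one must match the definition of the quotient hypergroup $H/N$ (cosets of $N$ with the induced hyperoperation) against the complex multiplication on $S/\!\!/T$, and confirm that normality of $N$ --- equivalently, of $T$ --- is exactly what makes both constructions well defined and what makes $\pi$ strict. The remaining points, namely that closed subsets of $S$ coincide with sub-hypergroups of $\mathbf{H}(S)$ and that finiteness is preserved under passing to $X/T$ and $S/\!\!/T$, are essentially bookkeeping.
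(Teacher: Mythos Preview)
Your proposal is correct and follows essentially the same route as the paper: realize $H$ by a finite scheme $(X,S)$, identify $N$ with a normal closed subset $T$ of $S$, and show $\mathbf{H}(S)/\mathbf{H}(T)\cong\mathbf{H}(S\sslash T)$ via the structure-constant formula for quotient schemes in \cite{zieschang2006theory}. The only cosmetic difference is packaging: the paper writes down the map $\mathbf{H}(S\sslash N)\to H/N$, $a^N\mapsto aN$, and checks directly that it is bijective (using \cite[Lemma~4.1.1]{zieschang2006theory} and normality) and strict (using \cite[Theorem~4.1.3(ii)]{zieschang2006theory}), whereas you run the quotient map $\pi$ the other way and invoke the first isomorphism theorem; the underlying computation is the same.
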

\begin{proof}
Let $S$ be an association scheme on a finite set $X$ such that $H=\mathbf{H}(S)$. Then we can consider $N$ as a normal closed subset of $S$ (cf. Proposition \ref{normalproposition}). We claim that \[H/N \simeq\mathbf{H}(S\sslash N),\] 
where $H/N$ is the quotient hypergroup and $S\sslash N$ is the quotient association scheme (of $S$ by $N$). Consider the following map:
\[\varphi: \mathbf{H}(S\sslash N) \longrightarrow H/N, \quad a^N \mapsto aN.\]
Since $N$ is normal, it follows from \cite[Lemma 4.1.1]{zieschang2006theory} that
\[a^N=b^N \iff NaN=NbN \iff aN=bN.\]
Therefore, $\varphi$ is bijective. Furthermore, $\varphi$ is a strict homomorphism of hypergroups. In fact, it follows from \cite[Theorem 4.1.3 (ii)]{zieschang2006theory} that
\[(a^{r^N}_{p^Nq^N})n_N=\sum_{u \in NpN}\sum_{v \in NqN}a^r_{uv},\]
where $n_N$ is the valency of $N$, i.e., $n_N=\sum_{r \in N} n_r$ with $n_r=a_{rr^*}^{1_X}$. However, since $n_N >0$ and $NpN=pN$ and $NqN=qN$, we have
\[(a^{r^N}_{p^Nq^N})>0 \iff \sum_{u \in pN}\sum_{v \in qN}a^r_{uv} >0.\]
Thus,
\[r^N \in p^Nq^N \iff (a^{r^N}_{p^Nq^N})>0 \iff a^r_{uv}>0 \textrm{ for some }u \in pN, v \in qN.\]
This, in turn, is equivalent to $rN \in (pN)(qN)$. This proves that $\varphi$ is a strict homomorphism of hypergroups which is also bijective and hence $\varphi$ is an isomorphism.
\end{proof}

\begin{pro}\label{sub}
Let $H$ be a realizable hypergroup. Then any sub-hypergroup $T$ of $H$ is also realizable. 
\end{pro}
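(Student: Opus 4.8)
The plan is to mirror the structure of the proof of Proposition \ref{quotient}, replacing the quotient construction for association schemes with the analogous construction of a sub-association scheme attached to a closed subset. Given a realizable hypergroup $H$ and a sub-hypergroup $T$ of $H$, first I would fix an association scheme $(X,S)$ with $H = \mathbf{H}(S)$. Under this identification, the elements of $T$ form a subset of $S$; I claim this subset is a \emph{closed} subset $U$ of $S$ in the sense of Definition \ref{normalcondition}, i.e.\ $U^*U \subseteq U$. This should follow directly from the hypergroup axioms satisfied by $T$: since $T$ is a sub-hypergroup it contains the identity $1_X$, it is closed under taking inverses (so $U = U^*$), and it is closed under the hyperoperation, which is exactly complex multiplication of $S$; hence $U^*U = UU \subseteq U$.

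Next I would invoke Proposition \ref{normalproposition}(1) in the form: for a closed subset $U$ of $S$, the set $\mathbf{H}(U)$ is a sub-hypergroup of $\mathbf{H}(S)$. But more is needed — I want $\mathbf{H}(U)$, which is the hypergroup attached to $U$ regarded \emph{as an association scheme in its own right} (on a suitable subset of $X$, obtained by restricting to an equivalence class of the relation $\bigcup_{u \in U} u$, cf.\ \cite[\S 2]{zieschang2006theory}), to be isomorphic to $T$ as a hypergroup. The key point is that the complex multiplication computed inside the sub-association scheme $(X_U, U)$ agrees with the complex multiplication of $S$ restricted to $U$: this is a standard fact about closed subsets, essentially that the structure constants $a^r_{pq}$ for $p,q,r \in U$ are the same whether computed in $S$ or in the sub-scheme $U$ (one restricts attention to a single class $X_U$ of the closed subset). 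Granting this, the identity map $U \to U$ is simultaneously the underlying set of $\mathbf{H}(U)$ and of $T$, and it is a strict hypergroup isomorphism because $r \in pq$ in one structure iff $r \in pq$ in the other. Therefore $T \cong \mathbf{H}(U)$ is realizable.

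The main obstacle — and the place requiring the most care — is the compatibility of structure constants: verifying that passing to the sub-association scheme associated to a closed subset does not change the complex product. One must be careful about the underlying set: a closed subset $U$ of $S$ partitions $X$ into classes (the sets $xU := \{y : (x,y) \in u \text{ for some } u \in U\}$), and the sub-association scheme lives on one such class $X_U$; one checks $U$ indeed restricts to an association scheme on $X_U$ and that its structure constants coincide with those of $S$. All of this is in \cite[\S 2]{zieschang2006theory} (the theory of closed subsets), so in the write-up I would cite it rather than reprove it. A secondary point worth a sentence: if $H$ is \emph{finitely} realizable then $X$ is finite, hence $X_U$ is finite, so the same argument shows $T$ is finitely realizable — recovering the finite case of Theorem A(2) independently of Proposition \ref{quotient}.
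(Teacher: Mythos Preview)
Your proposal is correct and follows essentially the same route as the paper: identify the sub-hypergroup $T$ with a closed subset of $S$, pass to the sub-association scheme on a single class $Y = x_0T$ of that closed subset, and use the preservation of structure constants (the paper cites \cite[Theorem 2.1.8]{zieschang2006theory} for $a_{p_Yq_Y}^{r_Y}=a_{pq}^r$) to conclude that the map $t \mapsto t_Y$ is a bijective strict homomorphism. The only place the paper is more explicit than your sketch is in checking that $t \mapsto t_Y$ is injective (equivalently, that each $t_Y$ is nonempty and distinct); you should make this step visible in the write-up rather than leave it implicit in the citation.
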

\begin{proof}
Let $(X,S)$ be an association scheme which realizes $H$ and identify $S$ with $H$. Since $T$ is a sub-hypergroup of $H$, $T$ is a closed subset of $S$ as well. Fix $x_0 \in X$ and we define the following set:
\[
Y:=\{y \in X \mid (x_0,y) \in T\}. 
\]
For each $s \in S$, we let $s_Y:=s \cap (Y\times Y)$. It is known that $T_Y=\{t_Y \mid t \in T\}$ is an association scheme (see, \cite[Theorem 2.1.8]{zieschang2006theory}). We claim that $(Y,T_Y)$ realizes $T$. Let $\star$ be complex multiplications of association schemes. We show that the following map
\[
\varphi: T \longrightarrow T_Y, \quad t \mapsto t_Y
\]
is a strict homomorphism of hypergroups which is bijective. Indeed, clearly $\varphi$ is surjective by definition. Now, suppose that $s_Y=t_Y$ for $s,t \in T$. This implies that we have $(y,z) \in s_Y=t_Y$, or $(x_0,y) \in R_1$ and $(x_0,z) \in R_2$ for some $R_1,R_2 \in T$. Since $T^*T \subseteq T$ and $T$ is a subset of a partition $S$ of $X\times X$, it follows that $(y,z) \in R$ for a unique $R \in T$. This implies that $R=s=t$ and hence $\varphi$ is injective as well. All it remains to show is that $\varphi$ is strict. In other words, we have to show that 
\[
(s\star t)_Y=(s_Y)\star (t_Y). 
\]
However, this directly follows from \cite[Theorem 2.1.8]{zieschang2006theory} which states that
\[
a_{p_Yq_Y}^{r_Y}=a_{pq}^r. 
\] 
\end{proof}

\begin{pro}\label{iff}
Let $B$ be a finitely realizable hypergroup Then any normal sub-hypergroup $A$ and the quotient hypergroup $B/A$ are finitely realizable. 
\end{pro}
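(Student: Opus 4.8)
The plan is to deduce both assertions from Propositions \ref{sub} and \ref{quotient} together with Proposition \ref{normalproposition}, the only new point being that the constructions appearing in those proofs preserve finiteness. By hypothesis there is a finite association scheme $(X,S)$ with $B=\mathbf{H}(S)$; throughout we identify $S$ with $B$, so that $A$ is a closed subset of $S$ and (when $A$ is normal) a normal closed subset, via Proposition \ref{normalproposition}.

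For the quotient $B/A$, I would simply invoke Proposition \ref{quotient}: it gives an isomorphism $B/A\simeq\mathbf{H}(S\sslash N)$, where $N$ is the normal closed subset of $S$ corresponding to $A$. Since the underlying set of $S\sslash N$ is a quotient of the finite set $X$ (its elements are the classes $xN$ for $x\in X$), it is again finite, so $S\sslash N$ is a finite association scheme and $B/A$ is finitely realizable. (This is in fact exactly the content of Proposition \ref{quotient}, restated.)

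For the sub-hypergroup $A$, I would re-run the argument of Proposition \ref{sub} and check that it stays within the finite world. Fix $x_0\in X$ and put $Y:=\{y\in X\mid (x_0,y)\in A\}$. Because $A$ is a closed subset we have $1_X\in A$, hence $x_0\in Y$ and $Y$ is nonempty; and since $X$ is finite, $Y$ is finite. By \cite[Theorem 2.1.8]{zieschang2006theory} the collection $A_Y:=\{a_Y\mid a\in A\}$ with $a_Y:=a\cap(Y\times Y)$ is an association scheme on $Y$, and the proof of Proposition \ref{sub} exhibits the map $a\mapsto a_Y$ as a strict bijective homomorphism $A\to\mathbf{H}(A_Y)$, hence an isomorphism. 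As $(Y,A_Y)$ is finite, $A$ is finitely realizable; note this half does not even use normality of $A$.

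There is no substantial obstacle here: the whole argument consists in observing that passing from $X$ to $Y\subseteq X$, and from $X$ to $X/N$, keeps the underlying set finite, after which Propositions \ref{sub} and \ref{quotient} apply verbatim. The one spot where care is needed is precisely this finiteness bookkeeping — confirming that $Y$ is nonempty and finite, and that the underlying set of $S\sslash N$ is finite — and that is entirely routine.
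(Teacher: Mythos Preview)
Your proposal is correct and takes essentially the same approach as the paper: the paper's proof is a one-line citation of Propositions \ref{quotient} and \ref{sub}, and you have simply unpacked the finiteness bookkeeping that the paper leaves implicit (in particular, Proposition \ref{sub} is stated only for ``realizable'' rather than ``finitely realizable'', so your observation that $Y\subseteq X$ is finite is exactly the missing detail).
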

\begin{proof}
It follows from Propositions \ref{quotient} and \ref{sub} that if $B$ is realizable then $A$ and $B/A$ are realizable. 
\end{proof}

\subsection{Congruence relations on hypergroups}
In this subsection we consider congruence relations on hypergroups. The case when hypergroups are commutative was considered in \cite{jun2015algebraic}. In general, a congruence relation is an equivalence relation which preserves algebraic structures. But, since a hyperoperation of two elements is not an element but a set, one needs to define the meaning of two sets are equivalent. The following definition has been introduced in \cite{jun2015algebraic} (for commutative hypergroups).   

\begin{mydef}
Let $H$ be a hypergroup and $\equiv$ be an equivalence relation on $H$. For subsets $A,B \subseteq H$, we say that $A$ is equivalent to $B$ (under $\equiv$) if for any $a \in A$ and $b \in B$, there exist $b' \in A$ and $a' \in B$ such that $a\equiv a'$ and $b \equiv b'$. In this case, we write $A \equiv B$. 
\end{mydef}

\begin{mydef}
Let $H$ be a hypergroup. A congruence relation on $H$ is an equivalence relation $\equiv$ on $H$ such that
\begin{enumerate}
\item
For $a,b,x,y \in H$, if $a \equiv x$ and $b \equiv y$ then $ab \equiv xy$ and $ba \equiv yx$. 
\item \label{redundant}
For $a,b \in H$, if $a \equiv b$ then $a^{-1} \equiv b^{-1}$. 
\end{enumerate}
\end{mydef}

\begin{rmk}
When $H$ is a group, the condition \eqref{redundant} is redundant. For hypergroups, since $a*a^{-1}$ is not in general $\{e\}$ but only required to contain $\{e\}$, the condition \eqref{redundant} is necessary.    
\end{rmk}

Let $H$ be a hypergroup and $\equiv$ be a congruence relation on $H$. Let $(H/\equiv):=\{[a]\mid a \in H\}$ be the set of equivalence classes. Let $*$ be a hyperoperation of $H$. We impose the following hyperoperation $\boxdot$ on $H/\equiv$:
\begin{equation}\label{congruenceop}
[x]\boxdot[y]:=\{[z] \mid z \in x'*y' \textrm{ for some }x',y' \in H \textrm{ such that } [x]=[x'],[y]=[y']\}.
\end{equation}

\begin{pro}
Let $H$ be a hypergroup and $\equiv$ be a congruence relation on $H$. Then $H/\equiv$ is a hypergroup with the hyperoperation \eqref{congruenceop}. 
\end{pro}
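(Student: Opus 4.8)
The plan is to verify the four hypergroup axioms for $(H/\equiv, \boxdot)$ directly, leaning throughout on the compatibility of $\equiv$ with $*$ and with inversion. First I would record a convenient reformulation of $\boxdot$: one checks that $[z] \in [x]\boxdot[y]$ if and only if there exist representatives $x' \equiv x$, $y' \equiv y$ and $z' \equiv z$ with $z' \in x'*y'$; this symmetric form makes the associativity computation cleaner. A small preliminary lemma I would prove is that if $A \equiv B$ as subsets of $H$, then $\{[a] : a \in A\} = \{[b] : b \in B\}$ as subsets of $H/\equiv$ — this is immediate from the definition of equivalence of subsets and is the bridge that lets me pass between the hyperoperation on $H$ and the one on $H/\equiv$.

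Next I would handle the identity and inverse axioms, which are the easy ones. For the identity, let $e$ be the identity of $H$ and I claim $[e]$ is the identity of $H/\equiv$: $[e]\boxdot[x]$ consists of all $[z]$ with $z \in e'*x'$ for some $e' \equiv e$, $x' \equiv x$; since $e'*x' \equiv e*x = \{x\}$ by congruence (condition (1) applied with $a=e' , x=e$, $b = x', y = x$ — wait, one uses $e' \equiv e$ and $x' \equiv x$ so $e'*x' \equiv e*x$), every such $z$ satisfies $z \equiv x$, giving $[e]\boxdot[x] = \{[x]\}$, and symmetrically on the other side. Uniqueness of $[e]$ follows because any identity of the quotient pulls back to force $e'' * x \equiv x$ for all $x$, and taking $x = e$ gives $e'' \equiv e$. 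For inverses, given $[f]$ I claim $[f]^{-1} = [f^{-1}]$: one needs $[e] \in ([f^{-1}]\boxdot[f]) \cap ([f]\boxdot[f^{-1}])$, which holds since $e \in f^{-1}*f$ in $H$; uniqueness uses condition (2) of congruence together with reversibility, exactly as in the classical group argument adapted to the hyper-setting. Reversibility of $\boxdot$ itself: if $[z] \in [x]\boxdot[y]$, pick $x',y',z'$ with $z' \in x'*y'$, $z'\equiv z$; reversibility in $H$ gives $x' \in z'*y'^{-1}$, hence $[x] = [x'] \in [z']\boxdot[y'^{-1}] = [z]\boxdot[y]^{-1}$ using $[y'^{-1}] = [y']^{-1} = [y]^{-1}$ from condition (2); symmetrically $[y] \in [x]^{-1}\boxdot[z]$.

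The main obstacle is associativity, $([x]\boxdot[y])\boxdot[z] = [x]\boxdot([y]\boxdot[z])$. Here is the plan for the $\subseteq$ direction (the other is symmetric). Take $[w]$ in the left side. Unwinding, there is $[u] \in [x]\boxdot[y]$ with $[w] \in [u]\boxdot[z]$; so there are $x_1 \equiv x$, $y_1 \equiv y$ with some $u_1 \in x_1 * y_1$, $u_1 \equiv u$, and then $u_2 \equiv u$, $z_1 \equiv z$ with some $w_1 \in u_2 * z_1$, $w_1 \equiv w$. The friction is that $u_1$ and $u_2$ need not be equal — they are only $\equiv$-related. To fix this I would use that $\equiv$ is a congruence: from $u_1 \equiv u_2$ and $z_1 \equiv z_1$ we get $u_1 * z_1 \equiv u_2 * z_1$, so there is $w_2 \in u_1 * z_1$ with $w_2 \equiv w_1 \equiv w$. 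Now $w_2 \in u_1 * z_1 \subseteq (x_1 * y_1) * z_1 = x_1 * (y_1 * z_1)$ by associativity in $H$, so $w_2 \in x_1 * v$ for some $v \in y_1 * z_1$. Then $[v] \in [y]\boxdot[z]$ and $[w] = [w_2] \in [x_1]\boxdot[v] = [x]\boxdot([y]\boxdot[z])$, as desired. The reverse inclusion is the mirror image. I expect the bookkeeping of representatives (keeping straight which elements are literally equal versus merely $\equiv$-related, and invoking the congruence property at exactly the right moment to reconcile them) to be the only genuinely delicate point; everything else is a routine transcription of the group-theoretic proofs into the hyperoperation language, using that $A \equiv B$ descends to equality of image sets in $H/\equiv$.
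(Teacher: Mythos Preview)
Your proposal is correct and follows essentially the same approach as the paper's proof: verify identity, inverse, associativity, and reversibility directly by unwinding to representatives in $H$ and using the congruence conditions. In fact you are more careful than the paper on associativity---the paper simply asserts that it ``easily follows from the associativity of $H$,'' whereas you correctly isolate and resolve the one genuine subtlety (the two occurrences of $[u]$ may involve different representatives $u_1,u_2$, and one must invoke the congruence condition $u_1*z_1\equiv u_2*z_1$ to reconcile them before applying associativity in $H$).
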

\begin{proof}
We use the notation $*$ for a hyperoperation of $H$ and $\boxdot$ for $H/\equiv$. Let $e$ be the identity element of $H$. We claim that $[e]$ is the identity element of $H/\equiv$. Clearly we have $[a] \in [e]\boxdot [a]$ $\forall [a] \in H/\equiv$. Suppose that $[b] \in [e] \boxdot [a]$. This implies that there exist $b',e',a' \in H$ such that $[b]=[b']$, $[e]=[e']$, $[a]=[a']$, and $b' \in e'*a'$. Since $e\equiv e'$ and $e$ is the identity, we have $e*a'=a' \equiv e'*a'$. This means that $b' \equiv a'$ and hence $[a]=[b]$. It follows that $[a]\boxdot [e]=[a]$ and therefore $[e]$ is the identity element of $H/\equiv$. Similarly, we have $[e]\boxdot [a]=[a]$.\\
Next, for any $[a] \in H/\equiv$, one can easily observe that $[e] \in [a]\boxdot [a^{-1}]$. We claim that $[a^{-1}]$ is the unique inverse of $[a]$. Indeed, suppose that $[e] \in [a] \boxdot [b]$. Then we have $e',a',b' \in H$ such that $[e]=[e']$, $[a]=[a']$, $[b]=[b']$, and $e' \in a'*b'$. It follows from the reversibility of $H$ that $a' \in e'*(b')^{-1}$. Notice that since $e \equiv e'$, we have $e*(b')^{-1}=(b')^{-1} \equiv e'*(b')^{-1}$ by multiplying $(b')^{-1}$. This implies that any element $c \in e'*(b')^{-1}$ should be congruent to $(b')^{-1}$, in particular, $a' \equiv (b')^{-1}$. It follows from the condition \eqref{redundant} that
\[
a^{-1} \equiv (a')^{-1} \equiv b'
\]
and hence we have $[a^{-1}]=[b']=[b]$. Similarly, one can show that $[e] \in [a^{-1}]\boxdot [a]$ and such $[a^{-1}]$ uniquely exists. This shows that an inverse is unique. The associativity of $H/\equiv$ easily follows from the associativity of $H$.\\
Finally, the reversibility of $H/\equiv$ also directly follows from that of $H$. For example, if $[c] \in [a]\boxdot [b]$, then we have $c',a',b' \in H$ such that $c' \in a'*b'$ and $[c']=[c]$, $[a']=[a]$, and $[b']=[b]$. It follows from the reversibility of $H$ that $a' \in c'*(b')^{-1}$ and from the above argument on inverses, we have $[a] \in [c]\boxdot [b]^{-1}$. This completes our proof.  
\end{proof}

\begin{pro}\label{kernel}
Let $H$ be a hypergroup and $\equiv$ be a congruence relation on $H$. Then a canonical projection
\[
\pi:H \longrightarrow H/\equiv, \quad a \mapsto [a]
\]
is strict. In particular, $H/\Ker(\pi)$ is isomorphic to $H/\equiv$. 
\end{pro}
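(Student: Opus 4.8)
## Proof proposal for Proposition \ref{kernel}

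The plan is to verify the two assertions in turn. First I would show strictness of the projection $\pi\colon H\to H/\equiv$: I need $\pi(x*y)=\pi(x)\boxdot\pi(y)$ for all $x,y\in H$, i.e. $\{[z]\mid z\in x*y\}=[x]\boxdot[y]$. The inclusion $\subseteq$ is immediate from the definition \eqref{congruenceop} by taking $x'=x$, $y'=y$. For the reverse inclusion, suppose $[z]\in[x]\boxdot[y]$; then there are $x',y'\in H$ with $[x']=[x]$, $[y']=[y]$ and some $z'\in x'*y'$ with $[z']=[z]$. Since $\equiv$ is a congruence, $x\equiv x'$ and $y\equiv y'$ give $x*y\equiv x'*y'$; hence there is $z''\in x*y$ with $z''\equiv z'$, so $[z]=[z'']$ with $z''\in x*y$, which shows $[z]\in\pi(x*y)$. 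This gives strictness.

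Next I would identify $\Ker(\pi)$ and set up the isomorphism with $H/\equiv$. By definition $\Ker(\pi)=\{a\in H\mid [a]=[e]\}=\{a\in H\mid a\equiv e\}$, which is the equivalence class of the identity; one checks it is a (normal) sub-hypergroup of $H$, so the quotient $H/\Ker(\pi)$ is defined. The natural candidate for the isomorphism $H/\Ker(\pi)\to H/\equiv$ sends the coset $a\,\Ker(\pi)$ to $[a]$; I would check it is well defined (if $a\,\Ker(\pi)=b\,\Ker(\pi)$ then $a\equiv b$, using that $\Ker(\pi)$ is precisely the class of $e$ together with reversibility/normality), that it is injective by the same token in reverse, that it is surjective, and that it is a strict homomorphism — the last point being exactly the content of the first isomorphism theorem for hypergroups applied to the strict surjection $\pi$ (cf. \cite{corsini2003applications}, \cite{Dav2}), which was already invoked in the proof of Proposition \ref{productrealiz}.

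Concretely, the cleanest route is: prove strictness of $\pi$ as above, observe that $\pi$ is surjective, and then simply cite the first isomorphism theorem for hypergroups to conclude $H/\Ker(\pi)\cong H/\equiv$; no further hand computation is needed. The one place that requires a little care — and the main obstacle — is confirming that the equivalence relation induced on $H$ by the congruence $\Ker(\pi)$ (i.e. $a\sim b$ iff $e\in a*b^{-1}*\Ker(\pi)$, or whatever definition of coset-equality the cited quotient construction uses) coincides with $\equiv$ itself; this is where one must genuinely use both axioms of a congruence relation, in particular condition \eqref{redundant}, exactly as in the verification of inverses in the preceding proposition. Once that identification is in hand, everything else is formal.
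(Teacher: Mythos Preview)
Your proposal is correct and follows essentially the same route as the paper: prove strictness of $\pi$ (the paper gives only the nontrivial inclusion $\pi(x)\boxdot\pi(y)\subseteq\pi(x*y)$, exactly as you do), observe surjectivity, and then invoke the first isomorphism theorem for hypergroups. The ``main obstacle'' you flag at the end---checking that the coset relation from $\Ker(\pi)$ agrees with $\equiv$---is not needed once you cite that theorem as a black box (which is what the paper does), so your ``cleanest route'' is already the full proof.
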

\begin{proof}
To avoid any notational confusion, we let $\boxdot$ be a hyperoperation of $H/\equiv$ and $*$ be a hyperoperation of $H$. Once we show that $\pi$ is strict, the result follows from the first isomorphism theorem of hypergroups. Clearly, $\pi$ is a homomorphism of hypergroups. We claim that 
\[
\pi(x)\boxdot\pi(y) \subseteq \pi(x*y).
\] 
Suppose that $[z] \in [x]\boxdot [y]=\pi(x)\boxdot \pi(y)$. This means that there exist $x',y' \in H$ such that $[x]=[x']$, $[y]=[y']$, and $z \in x'*y'$. But since $\equiv$ is a congruence relation, we have $x'*y'\equiv x*y$ and hence there exists $z' \in x*y$ such that $z \equiv z'$. It follows that $[z']=[z] \in [x*y]=\pi(x*y)$ and therefore $\pi(x)\boxdot \pi(y) \subseteq \pi(x*y)$. 
 
\end{proof}

\begin{cor}\label{realcongruence}
Let $H$ be a hypergroup. Then $H$ is finitely realizable if and only if for any congruence relation $\equiv$ on $H$, $H/\equiv$ is finitely realizable. 
\end{cor}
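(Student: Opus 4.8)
The plan is to prove the two implications of Corollary \ref{realcongruence} separately, the forward direction being essentially immediate from the machinery already developed, and the reverse direction being a triviality.

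For the reverse direction, suppose that for every congruence relation $\equiv$ on $H$, the quotient $H/\equiv$ is finitely realizable. Take $\equiv$ to be the trivial (equality) congruence relation on $H$: then $a\equiv b$ if and only if $a=b$. One checks immediately that this is a congruence relation in the sense of the definition above (both conditions hold vacuously), and that the quotient $H/\equiv$ is canonically isomorphic to $H$ itself, since each equivalence class is a singleton and the hyperoperation $\boxdot$ of \eqref{congruenceop} reduces to $*$. Hence $H\cong H/\equiv$ is finitely realizable, which is the claim.

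For the forward direction, suppose $H$ is finitely realizable, and let $\equiv$ be an arbitrary congruence relation on $H$. By Proposition \ref{kernel}, the canonical projection $\pi\colon H\longrightarrow H/\equiv$ is strict, and $H/\equiv$ is isomorphic to $H/\Ker(\pi)$. Now $\Ker(\pi)$ is a normal sub-hypergroup of $H$: since $\pi$ is a strict homomorphism, $N:=\Ker(\pi)=\{a\in H\mid \pi(a)=[e]\}$ is a sub-hypergroup, and normality $hN=Nh$ for all $h\in H$ follows because $\pi(hN)=\pi(h)\boxdot\pi(N)=\pi(h)\boxdot[e]=\pi(h)=[e]\boxdot\pi(h)=\pi(N)\boxdot\pi(h)=\pi(Nh)$, and $N$ is precisely the preimage of $[e]$ so equality of images pulls back to equality of these cosets (one uses here that $H/\equiv$, being realizable, in particular has the property that $hN$ and $Nh$ are each contained in a single coset, a standard fact for $\mathbf{H}(S)$). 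Then Proposition \ref{iff} applies directly: $H/N$ is finitely realizable. Since $H/\equiv\simeq H/\Ker(\pi)=H/N$, we conclude $H/\equiv$ is finitely realizable.

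The main obstacle I anticipate is the identification of $\Ker(\pi)$ as a genuine \emph{normal} sub-hypergroup to which Proposition \ref{iff} can be applied; one must be careful that the notion of kernel used in the first isomorphism theorem for hypergroups (invoked in Propositions \ref{productrealiz} and \ref{kernel}) matches the notion of normal sub-hypergroup in Definition \ref{normalcondition}, and that the first isomorphism theorem indeed yields a hypergroup \emph{isomorphism} $H/\Ker(\pi)\simeq H/\equiv$ and not merely a bijective strict homomorphism. Assuming the first isomorphism theorem as stated in the cited references, the rest is bookkeeping.
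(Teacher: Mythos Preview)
Your proposal is correct and follows essentially the same route as the paper: the reverse direction via the trivial congruence, and the forward direction via Proposition~\ref{kernel} to identify $H/\equiv\simeq H/\Ker(\pi)$ and then Proposition~\ref{quotient} (packaged in Proposition~\ref{iff}) to conclude. The paper, like you, simply invokes the first isomorphism theorem from the cited references and does not spell out the normality of $\Ker(\pi)$, so your flagged ``obstacle'' is handled identically---by deferral to \cite{corsini2003applications}, \cite{Dav2}---and your inline argument for normality, while suggestive, is not needed (and indeed the step ``equality of images pulls back to equality of cosets'' would require more care if you wanted it to stand on its own).
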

\begin{proof}
Let $H$ be a finitely realizable hypergroup. Then for any congruence relation $\equiv$ on $H$, it follows from Proposition \ref{kernel} that $H/\equiv$ is isomorphic to $H/\Ker(\pi)$, where $\pi:H \longrightarrow H/\equiv$ is a canonical projection. But, from Proposition \ref{quotient}, the hypergroup $H/\Ker(\pi)$ is finitely realizable since $H$ is finitely realizable. Conversely, by considering the trivial congruence relation $\equiv$; $x \equiv y \iff x=y$, we obtain $H=H/\equiv$ and hence $H$ is finitely realizable. 
\end{proof}

\section{Examples of realizable hypergroups}\label{examples}
In this subsection, we present two classes of realizable hypergroups. The first example comes from a group $G$ and a subgroup $P$ of the group $\Aut(G)$ of automorphisms of $G$. The second example comes from totally ordered abelian groups. 
\subsection{Partition hypergroups}\label{partition}
The goal of this subsection is to present an example which demonstrates some possibilities of duplication between theory of hypergroups and theory of association schemes.\\ 

We first recall the definition of partition hypergroups. Let $G$ be a group and $P$ be a subgroup of the group $\Aut(G)$ of automorphisms of $G$. One can define an equivalence relation $\sim$ on $G$ as follows: for $x,y \in G$,
\begin{equation}
x \sim y \iff \exists g \in P \textrm{ such that } y=g(x). 
\end{equation} 
Let $\{G\}_P:=\{[x] \mid x \in G\}$ be the set of equivalence classes under $\sim$. One imposes the following hyperoperation $*$ on $\{G\}_P$: for $[x],[y] \in \{G\}_P$,
\begin{equation}\label{hyperoperation}
[x]*[y]:=\{[z] \mid z=x'y' \textrm{ for some } x',y' \in G \textrm{ such that } [x']=[x],[y']=[y]\}.
\end{equation}

It is well known that $\{G\}_P$ equipped with \eqref{hyperoperation} becomes a hypergroup, but we include the proof. Partly, this is because there are many different (and some of them are not equivalent) definitions of hypergroups in literatures and we want to make sure that the statement holds with our setting. 

\begin{rmk}
In \cite{campaigne1940partition}, partition hypergroups are defined in more general setting by using a notion of conjugations. However, since the definition of hypergroups in \cite{campaigne1940partition} is more general than our definition, we do not know whether more general notion of partition hypergroups can be used in our case except the construction we presented above.   
\end{rmk}

\begin{pro}
Let $\{G\}_P$ be as above. Then, with the hyperoperation $*$ in \eqref{hyperoperation}, $\{G\}_P$ is a hypergroup. 
\end{pro}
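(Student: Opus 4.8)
The plan is to verify directly that $(\{G\}_P, *)$ satisfies the four hypergroup axioms, using the group structure of $G$ and the fact that $P \leq \Aut(G)$. Throughout, the key elementary observation is that for $g \in P$ and $x, y \in G$ one has $g(xy) = g(x)g(y)$, so that the product of a $P$-orbit representative of $x$ with a $P$-orbit representative of $y$ always lands in finitely (or arbitrarily) many $P$-orbits, and crucially $[x][y]$ always contains $[xy]$ (take $x' = x$, $y' = y$). In particular, $[x]*[y]$ is a nonempty subset of $\{G\}_P$, so $*$ is a genuine hyperoperation.

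First I would check associativity: for $[x],[y],[z] \in \{G\}_P$, I claim $([x]*[y])*[z] = \{[w] \mid w = x'y'z' \text{ for some } x',y',z' \in G,\ [x']=[x],[y']=[y],[z']=[z]\}$, and symmetrically for $[x]*([y]*[z])$. One inclusion is immediate by unwinding the definition twice; for the reverse inclusion one uses that if $[u] = [x'y']$ then $u = g(x'y') = g(x')g(y')$ for some $g \in P$, and $g(x'), g(y')$ are again representatives of $[x], [y]$ — so one can "re-associate" inside a single orbit. This orbit-shifting trick via $g \in \Aut(G)$ is exactly what makes the two triple products coincide, and I expect this to be the main (though still routine) obstacle, since one must be careful that the middle factor can be moved consistently. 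Next, for the identity: let $e$ be the identity of $G$. Since every $g \in P$ fixes $e$, the orbit $[e] = \{e\}$, and $[e]*[x] = \{[z] \mid z = e \cdot x' = x',\ [x']=[x]\} = \{[x]\}$; similarly $[x]*[e] = \{[x]\}$. Uniqueness of the identity follows because any identity must in particular satisfy $[e']*[e] = \{[e]\}$, forcing $[e'] = [e]$.

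For inverses: given $[x]$, I claim $[x^{-1}]$ works. Indeed $[x]*[x^{-1}] \ni [x x^{-1}] = [e]$, and likewise $[x^{-1}]*[x] \ni [e]$, so $[e] \in ([x]*[x^{-1}]) \cap ([x^{-1}]*[x])$. For uniqueness, suppose $[e] \in [x]*[y]$; then $e = g(x') y'$ for some representatives, whence $y' = g(x')^{-1} = g(x^{-1})$ after choosing $x' $ a representative of $[x]$, so $y' \sim x^{-1}$, i.e. $[y] = [x^{-1}]$; note $g(x^{-1}) = g(x)^{-1}$ since $g$ is an automorphism, and $g(x)$ is itself a representative of $[x]$, so this is consistent. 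Finally, reversibility: if $[w] \in [x]*[y]$, then $w = x'y'$ for representatives $x',y'$ (after absorbing the outer $g$ into $x'$), so in $G$ we have $x' = w (y')^{-1} = w \cdot g'(y^{-1})$ for the appropriate $g'$, giving $[x] = [w(y')^{-1}] \in [w]*[y^{-1}]$, and symmetrically $[y] \in [x^{-1}]*[w]$, using again that $P$-orbits are closed under the automorphisms and under $x \mapsto x^{-1}$ composed with elements of $P$. Assembling these four verifications completes the proof; the only genuinely delicate point, as noted, is the associativity reduction to the common triple-product description.
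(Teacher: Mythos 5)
Your proposal is correct and follows essentially the same route as the paper's own proof: the identity via $[e]=\{e\}$ (automorphisms fix $e$), inverses via $[x^{-1}]$ with uniqueness from $e=g(x)y''$ forcing $[y]=[x^{-1}]$, and associativity by identifying both triple products with the set of classes of products of representatives, using the trick of rewriting $g(x'y')=g(x')g(y')$ to re-choose representatives. Your reversibility argument is in fact spelled out in more detail than the paper's one-line remark, but it is the same idea, so there is nothing further to add.
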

\begin{proof}
Let $e$ be the identity element of $G$. Then $[e]$ is the identity element of $\{G\}_P$. Indeed, since any automorphism of $G$ should fix $e$, we have that $[e]=\{e\}$. Since $a=ae$ in $G$, we have $[a] \in [a]*[e]$. If $[b] \in [a]*[e]$, then $b=a'e'$ for some $[a']=[a]$ and $[e']=[e]$. Since $[e]=\{e\}$, this implies that $b=a'$ and $[b]=[a]$. Therefore $[a]*[e]=[a]$ and similarly one obtains $[e]*[a]=[a]$. It follows directly that $[e]$ is the unique identity.\\
Next, the existence of an inverse follows from the existence of an inverse in $G$. In other words, for $[a] \in \{G\}_{P}$, we have $[e] \in [a]*[a^{-1}]$, where $a^{-1}$ is the inverse of $a$ in $G$. Suppose that $[e] \in [a]*[b]$. This means that $e=xy$, where $x=g(a)$ and $y=h(b)$ for some $g,h \in P$. It follows that
\[
b=h^{-1}(y)=h^{-1}(x^{-1})=h^{-1}g(a^{-1})=(h^{-1}g)(a^{-1}) 
\]
and hence $[b]=[a^{-1}]$. This proves that the inverse is unique.\\
Finally, we show that $*$ is associative. Let $[a],[b],[c] \in \{G\}_P$ and
\[
X:=\{[d] \mid d=a'b'c' \textrm{ for some } a',b',c' \in G \textrm{ such that} [a']=[a],[b']=[b],[c']=[c]\}.
\] 
We claim that
\[
([a]*[b])*[c]=X=[a]*([b]*[c]).
\] 
Indeed, if $[t] \in ([a]*[b])*[c]$, then $[t] \in [l]*[c]$ for some $[l] \in [a]*[b]$. In other words, $t=l'c'$ and $l=a'b'$ for some $[l']=[l]$, $[c']=[c]$, $[a']=[a]$, and $[b']=[b]$. It follows that $l'=g(l)$ for some $g \in P$ and hence $g(l)=l'=g(a')g(b')$. Therefore we have $t=g(a')g(b')c'$, where $[g(a')]=[a]$, $[g(b')]=[b]$, $[c']=[c]$, and hence we have $[t] \in X$. This shows that $([a]*[b])*[c] \subseteq X$. Conversely, for any $[d] \in X$, we have $d=a'b'c'$, where $[a']=[a],[b']=[b],[c']=[c]$. Since $[a'b'] \in [a][b]$, $[d] \in ([a]*[b])*[c]$. One can similarly prove that $X=[a]*([b]*[c])$. The reversibility easily follows from that of $G$. 
\end{proof}

On the other hand, one can naturally construct an association scheme from a group $G$ and a subgroup $P$ of $\Aut(G)$. To be specific, we define that two pairs $(x,y),(a,b) \in G\times G$ are in the same cell of a partition of $G\times G$ if and only if $xy^{-1}$ and $ab^{-1}$ are in the same element of $\{G\}_P$. In other words, there exists $g \in P$ such that $xy^{-1}=g(ab^{-1})$. It is straightforward that $\{G\}_P$ can be considered as a partition of $G \times G$ in this way. Proposition \ref{associationschemepartitiongroup} shows that this is, in fact, an association scheme. We call this association scheme as a partition association scheme. 
\begin{rmk}
When $P=\{e\}$ is the trivial subgroup of $\Aut(G)$, we obtain $\{G\}_P=G$ and in this case the association scheme which realizes $\{G\}_P$ is same as one obtains from the group correspondence (see Example \ref{groupcoress}). 
\end{rmk}

\begin{pro}\label{associationschemepartitiongroup}
Let $G$ be a group and $P$ be a subgroup of the group $\Aut(G)$ of automorphisms of $G$. Then the partition association scheme $(G,P)$ is indeed an association scheme. 
\end{pro}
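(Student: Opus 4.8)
The plan is to verify directly that the partition $S := \{G\}_P$ of $G \times G$, where two pairs $(x,y)$ and $(a,b)$ lie in the same cell iff $xy^{-1} = g(ab^{-1})$ for some $g \in P$, satisfies the three defining requirements of an association scheme: (i) $1_G \in S$; (ii) $p \in S \implies p^* \in S$; (iii) the structure constants $a_{pq}^r$ are well defined. For (i), the cell containing the diagonal is exactly $\{(x,y) \mid xy^{-1} = e\}$ since $P$ fixes $e$, so $1_G$ is a single cell. For (ii), if $p = [g]$ is the cell with $xy^{-1} \in [g]$, then $(y,x)$ has $yx^{-1} = (xy^{-1})^{-1}$, and since every element of $P$ is a group automorphism, $[g]^{-1}$ (meaning the class of inverses) is again a single class; hence $p^* = [g^{-1}]$ lies in $S$. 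These two are quick.

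The substance is (iii): given cells $p, q, r$ corresponding to orbits $O_p, O_q, O_r \subseteq G$ (under the $P$-action on $G$), I must show that for any $(x,z) \in r$ the cardinality $|\{y \in G \mid (x,y) \in p,\ (y,z) \in q\}|$ depends only on $p, q, r$ and not on the chosen pair $(x,z)$. First I would rewrite the set: $(x,y) \in p$ means $xy^{-1} \in O_p$, i.e. $y \in (O_p)^{-1} x$ after suitable bookkeeping, and $(y,z) \in q$ means $yz^{-1} \in O_q$. So the set in question is $\{y \mid xy^{-1} \in O_p,\ yz^{-1} \in O_q\}$, and I want to compare its size across different representatives of $r$. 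The key step is to produce, given two pairs $(x,z)$ and $(x',z')$ with $xz^{-1}, x'z'^{-1} \in O_r$, a bijection between the two counting sets. Since $x'z'^{-1} = h(xz^{-1})$ for some $h \in P$, I would use the substitution $y \mapsto$ (an appropriate translate/automorphic image of $y$); the cleanest route is probably left-translation: the set $\{y \mid xy^{-1} \in O_p,\ yz^{-1} \in O_q\}$ depends, via the substitution $y \mapsto yz^{-1}$, only on $xz^{-1}$ and the two orbits — concretely it is in bijection with $\{w \in G \mid (xz^{-1})w^{-1} \in O_p,\ w \in O_q\}$. This reduces the count to depend only on $xz^{-1} \in O_r$ and on $O_p, O_q$. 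Finally, to remove dependence on the particular element $xz^{-1}$ of the orbit $O_r$ (and not just the orbit), I apply the automorphism $h \in P$ carrying $xz^{-1}$ to $x'z'^{-1}$: the map $w \mapsto h(w)$ carries $\{w \mid (xz^{-1})w^{-1} \in O_p,\ w \in O_q\}$ bijectively onto $\{w' \mid (x'z'^{-1})w'^{-1} \in O_p,\ w' \in O_q\}$, because $h$ is a bijection of $G$ preserving the $P$-orbits $O_p$ and $O_q$ (as $P$ is a subgroup of $\Aut(G)$, hence closed under conjugation by its elements — indeed $h O_p = O_p$ since $O_p$ is a union... more precisely $h(O_p) = O_p$ as $O_p$ is a single orbit and $h \in P$). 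This gives the desired bijection and hence a well-defined cardinal $a_{pq}^r$.

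The main obstacle I anticipate is the bookkeeping in the second reduction: one must be careful that $h \in P$ genuinely stabilizes each orbit $O_p$ setwise — this is exactly where being a subgroup of $\Aut(G)$ (rather than an arbitrary set of automorphisms) is used, since $h(O_p) = \{h(g(a)) : g \in P\} = \{(hg)(a) : g \in P\} = O_p$ — and that the two substitutions $y \mapsto yz^{-1}$ and $w \mapsto h(w)$ compose correctly and respect the non-commutativity of $G$ (one-sided translations only). Once the orbit-stabilization is pinned down, the computation of the structure constant and the check that $a_{pq}^r$ is finite precisely when $G$ is finite are routine. I would also remark, in passing, that specializing $P = \{e\}$ recovers Example \ref{groupcoress}, and that when $P = \Inn(G)$ this is the class-function/conjugacy-class scheme, so the two earlier examples are instances.
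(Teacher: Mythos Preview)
Your proposal is correct and follows essentially the same route as the paper: the right-translation $y \mapsto yz^{-1}$ reduces $|A(x,z)|$ to a count depending only on $xz^{-1}$, and then the $P$-action is used to remove the remaining dependence on the particular representative of $[c]$. Your second reduction---via the explicit bijection $w \mapsto h(w)$ with $h \in P$ carrying $xz^{-1}$ to $x'z'^{-1}$---is in fact more carefully spelled out than the paper's version, which accomplishes the same end by relabeling the representatives $a,b$ so that $xz^{-1}=ab$.
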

\begin{proof}
Let $S:=\{G\}_P$. With an underlying set $G$, $1_G:=\{(x,x) \mid x \in G\}$ is $[e]$ and hence $1_G \in S$. Also, for $[a]=\{(x,y) \in G\times G \mid xy^{-1} \in [a]\}$, we have $[a]^*=[a^{-1}] \in S$.\\ Now, we check the condition of structure constants. For $p=[a],q=[b],r=[c] \in S$, we claim that the structure constant $a_{pq}^r$ is given as follows:
\begin{equation}\label{structureconstant}
a_{pq}^r =\left\{ \begin{array}{ll}
0 & \textrm{if $[c] \notin [a]*[b]$},\\
|\{t \in G \mid abt^{-1} \in [a], t\in[b]\}|& \textrm{if $[c] \in [a]*[b]$.}
\end{array} \right.
\end{equation}
To prove, let's first assume that $[c] \notin [a]*[b]$ and fix $x,z$ such that $xz^{-1} \in [c]$. Suppose that there exists $y \in G$ such that $xy^{-1} \in [a]$ and $yz^{-1} \in [b]$. Then $xy^{-1}=g_1(a)$ and $yz^{-1}=g_2(b)$ for some $g_1,g_2 \in P$. This implies that $(xy^{-1})(yz^{-1})=xz^{-1}=g_1(a)g_2(b)$ and hence $[xz^{-1}]=[c] \in [a]*[b]$. Therefore, in this case we should have $a_{pq}^r=0$.\\
Next, suppose that $[c] \in [a]*[b]$. Let's fix $x,z$ such that $xz^{-1} \in [c]$. We want to show that the cardinality of the following set
\begin{equation}
A(x,z):=\{y \in G \mid xy^{-1} \in [a],\textrm{ }  yz^{-1} \in [b]\}
\end{equation}  
does not depend on a choice of $x$ and $z$. Since $[c] \in [a]*[b]$, there exist $g_1,g_2 \in P$ such that $c=g_1(a)g_2(b)$. Since $xz^{-1} \in [c]$, we know that $xz^{-1}=g(c)$ for some $g \in P$ and hence we may assume that $xz^{-1}=g_1(a)g_2(b)$ for some $g_1,g_2 \in P$. But, since $[a]=[g_1(a)]$ and $[b]=[g_2(b)]$, we have the following:
\[
A(x,z)=\{y \in G \mid xy^{-1} \in [g_1(a)], \textrm{ }yz^{-1} \in [g_2(b)]\}.
\]
Therefore we may further assume that $xz^{-1}=ab$, or $x=abz$. It follows that we have
\[
A(x,z)=\{y \in G \mid abzy^{-1} \in [a],\textrm{ } yz^{-1} \in [b]\}. 
\] 
The set $A(x,z)$ still depends on $z$. Consider the following map:
\[
\varphi:A(x,z) \longrightarrow \{t \in G \mid abt^{-1}\in [a], t\in[b]\}, \quad y \mapsto yz^{-1}. 
\]
One can easily check that $\varphi$ is well-defined and bijective since $z$ is an element of a group $G$ (hence invertible). This shows that $A(x,z)$ indeed does not depend on $x$ and $z$ and the cardinality of $A(x,z)$ is equal to our claimed cardinality. This proves that $(G,P)$ is an association scheme. 
\end{proof}

\begin{rmk}
When $P$ is a trivial group, each equivalence class contains only one element. In this case, we have that, if $[c] \in [a]*[b]$
\[
\{t \in G \mid abt^{-1} \in [a], t\in[b]\}=\{b\}.
\]
Therefore we obtain the following structure constant:
\[
a_{pq}^r =\left\{ \begin{array}{ll}
0 & \textrm{if $[c] \notin [a]*[b]$},\\
1& \textrm{if $[c] \in [a]*[b]$}.
\end{array} \right.
\]
One can see that this is the structure constant which we obtain from the group correspondence (Example \ref{groupcoress}). 
\end{rmk}

\begin{cor}\label{realizalcor}
Let $G$ be a group and $P$ be a subgroup of the group $\Aut(G)$ of automorphisms of $G$. Then the partition hypergroup $\{G\}_P$ is realizable. 
\end{cor}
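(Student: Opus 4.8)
The plan is to identify the partition hypergroup $\{G\}_P$ with the hypergroup $\mathbf{H}((G,P))$ attached by Theorem \ref{asshyp} to the partition association scheme $(G,P)$. Since $(G,P)$ is an association scheme by Proposition \ref{associationschemepartitiongroup}, producing such an isomorphism exhibits $\{G\}_P$ as realizable (and as finitely realizable when $G$ is finite, since then $(G,P)$ is a finite association scheme).

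First I would set up the comparison map. By construction the underlying set of $(G,P)$ is $S:=\{G\}_P$, regarded as a partition of $G\times G$ via $[a]=\{(x,y)\in G\times G\mid xy^{-1}\in[a]\}$, and by Theorem \ref{asshyp} the hyperoperation on $\mathbf{H}((G,P))$ is the complex multiplication $\star$ of \eqref{complexmultiplication}. So there is a tautological bijection $\varphi\colon\{G\}_P\to\mathbf{H}((G,P))$ sending the class $[a]$ of $a\in G$ to the corresponding cell $[a]\in S$. The only thing left to check is that $\varphi$ is strict, i.e. $\varphi([a]*[b])=\varphi([a])\star\varphi([b])$ for all $[a],[b]$; once this is known, $\varphi$ is a bijective strict homomorphism, hence an isomorphism of hypergroups (its inverse is automatically strict as well, and $\varphi$ necessarily carries $[e]$ to the identity $1_G$ of $\mathbf{H}((G,P))$).

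Strictness will follow directly from the structure-constant computation already carried out in Proposition \ref{associationschemepartitiongroup}. By definition of $\star$, for $p=[a]$, $q=[b]$, $r=[c]$ one has $r\in p\star q$ if and only if $a_{pq}^r\ge 1$. Formula \eqref{structureconstant} gives $a_{pq}^r=0$ whenever $[c]\notin[a]*[b]$, and otherwise expresses $a_{pq}^r$ as the cardinality of a set that always contains the element $t=b$ (indeed $abb^{-1}=a\in[a]$ and $b\in[b]$), hence is nonempty. Thus $a_{pq}^r\ge 1$ exactly when $[c]\in[a]*[b]$, which by \eqref{hyperoperation} says precisely that $\varphi([a])\star\varphi([b])=[a]*[b]=\varphi([a]*[b])$. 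Honestly there is no real obstacle here — the corollary is essentially bookkeeping on top of Proposition \ref{associationschemepartitiongroup}; the only points deserving a moment's care are matching the underlying sets with their respective hyperoperations, and noting that in the case $[c]\in[a]*[b]$ the structure constant is genuinely positive (this is why one exhibits $t=b$). For infinite $G$ one should also remark that Theorem \ref{asshyp} applies to arbitrary, not necessarily finite, association schemes, so the functorial formalism of \S\ref{functor} is not needed.
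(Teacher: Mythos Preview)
Your proof is correct and follows essentially the same route as the paper: both identify $\{G\}_P$ with $\mathbf{H}((G,P))$ by checking that the structure-constant formula \eqref{structureconstant} yields $a_{[a][b]}^{[c]}>0$ precisely when $[c]\in[a]*[b]$. Your version is in fact slightly more careful in explicitly exhibiting $t=b$ to witness nonemptiness of the set in \eqref{structureconstant}, a point the paper leaves implicit.
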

\begin{proof}
We prove that the hypergroup $\{G\}_P$ is realized by the partition association scheme constructed in Proposition \ref{associationschemepartitiongroup}. Let $\boxdot$ be the complex multiplication of the partition association scheme $(G,P)$ and $*$ be the hyperoperation of the partition hypergroup $\{G\}_P$. We want to show that
\begin{equation}
[a]\boxdot [b] = [a]*[b],\quad  \forall [a],[b] \in \{G\}_P,
\end{equation}
where on the left hand side, $[a]$ and $[b]$ are considered as elements in the partition association scheme, whereas on the right hand side, they are considered as elements of a partition hypergroup $\{G\}_P$. But, $[c] \in [a]\boxdot [b]$ if and only if $a_{[a][b]}^{[c]} > 0$ if and only if $[c] \in [a]*[b]$. This proves that $(S,\boxdot)$ and $(\{G\}_P,*)$ are isomorphic and therefore $\{G\}_P$ is realizable. 
\end{proof}

\begin{rmk}
In our propositions, we did not assume that a group $G$ is finite. 
\end{rmk}

Next, we provide an example to illustrate some (previously unknown) duplication between theories of hypergroups and association schemes.\\

Let $G$ be a finite group and we fix a group $P$ to be $\Inn(G)$, the group of inner automorphisms of $G$. Then we have the following result of Campaigne. 

\begin{mythm}$($\cite[Theorem 9.4]{campaigne1940partition}$)$ \label{camthm}
Let $G$ be a finite group and $P$ be the group of inner automorphisms of $G$. Then $G$ is simple if and only if the partition hypergroup $\{G\}_P$ has no proper sub-hypergroups except the trivial one $\{e\}$. 
\end{mythm}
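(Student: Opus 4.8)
The plan is to translate the group-theoretic statement about $G$ into a statement about closed subsets of the partition association scheme $(G,P)$ and then transport everything through the isomorphism of Corollary \ref{realizalcor}. The key observation is that sub-hypergroups of $\{G\}_P$ correspond exactly to closed subsets of the association scheme $(G,P)$, and—crucially, because $P=\Inn(G)$ acts by conjugation—the equivalence classes $[x]$ are precisely the conjugacy classes of $G$, so a union of conjugacy classes $N=\bigsqcup_{i}[x_i]$ determines a sub-hypergroup of $\{G\}_P$ if and only if $N$ is a subgroup of $G$ (necessarily normal, being a union of conjugacy classes). The statement then becomes: $G$ has no proper nontrivial normal subgroup $\iff$ $\{G\}_P$ has no proper nontrivial sub-hypergroup; i.e. the simplicity of $G$.

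\textbf{Key steps.} First I would record the dictionary: for a subset $N\subseteq G$ closed under the $P$-action, write $\langle N\rangle_P:=\{[x]\mid x\in N\}\subseteq\{G\}_P$. I claim $\langle N\rangle_P$ is a sub-hypergroup precisely when $N\le G$; the forward direction uses that $[x]*[y]\subseteq\langle N\rangle_P$ forces $xy\in N$ for all $x,y\in N$ (taking representatives), and $[x^{-1}]\in\langle N\rangle_P$ forces $x^{-1}\in N$ up to conjugacy, hence $x^{-1}\in N$ since $N$ is conjugation-stable; the reverse direction is immediate from the definition \eqref{hyperoperation} of $*$. Conversely, any sub-hypergroup $K\subseteq\{G\}_P$ is a union of classes, so $N_K:=\bigcup_{[x]\in K}[x]$ is a conjugation-stable subset of $G$, and by the same reasoning $N_K\le G$. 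Second, I note this correspondence is inclusion-preserving and that $\langle\{e\}\rangle_P=\{[e]\}$ is the trivial sub-hypergroup while $\langle G\rangle_P=\{G\}_P$; hence proper nontrivial sub-hypergroups of $\{G\}_P$ biject with proper nontrivial conjugation-stable subgroups of $G$, i.e. proper nontrivial normal subgroups. Third, I conclude: $G$ simple $\iff$ no such normal subgroup exists $\iff$ no proper nontrivial sub-hypergroup of $\{G\}_P$ exists, which is the assertion. (Note that finiteness of $G$ is not actually needed for this argument, though we keep the hypothesis to match Campaigne's formulation.)

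\textbf{Main obstacle.} The only genuinely delicate point is verifying the forward direction of the dictionary—that $\langle N\rangle_P$ being a sub-hypergroup really forces $N$ to be closed under the group multiplication of $G$, not merely under multiplication "up to conjugacy." Here one must be careful: $[x]*[y]$ collects $[x'y']$ over \emph{all} representatives $x'\sim x$, $y'\sim y$, so membership of $[xy]$ in $\langle N\rangle_P$ only gives $xy\sim n$ for some $n\in N$ a priori. The resolution is precisely that $P=\Inn(G)$ makes $N$ (a union of classes) conjugation-stable, so $xy\sim n\in N$ implies $xy\in N$. This is exactly the place where the hypothesis $P=\Inn(G)$—rather than an arbitrary subgroup of $\Aut(G)$—is used, and it is why the statement is special to inner automorphisms.
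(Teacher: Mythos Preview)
Your argument is correct: the bijection you set up between conjugation-stable subgroups of $G$ (i.e.\ normal subgroups) and sub-hypergroups of $\{G\}_P$ is sound, and the ``main obstacle'' you identify is exactly the point where $P=\Inn(G)$ is used. One minor remark: although your opening sentence announces that you will ``transport everything through the isomorphism of Corollary~\ref{realizalcor},'' your actual proof never touches the partition association scheme---it is a purely hypergroup-theoretic argument, which is fine, but you should drop that sentence.

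This is, however, not what the paper does. The paper does not prove Theorem~\ref{camthm} at all: it is simply quoted from Campaigne. The paper's point in this passage is rather to observe that Theorem~\ref{camthm} and Theorem~\ref{schemesimple} (the association scheme $(G,P)$ is primitive iff $G$ is simple) are \emph{the same statement} once one knows that closed subsets of $(G,P)$ correspond to sub-hypergroups of $\mathbf{H}(G,P)=\{G\}_P$; this is offered as an illustration of duplication between hypergroup theory and association-scheme theory, not as a proof of either theorem. So your approach is genuinely different: you give a direct, self-contained proof via the dictionary ``normal subgroups $\leftrightarrow$ sub-hypergroups,'' whereas the paper deduces the statement (modulo the cited results) from the dictionary ``closed subsets $\leftrightarrow$ sub-hypergroups'' together with the known scheme-theoretic characterization of primitivity. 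Your route is more elementary and stands on its own; the paper's route is chosen precisely to advertise the association-scheme connection.
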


There is a similar theorem in scheme theory, and, in the remaining part of this section, we will see that the corresponding scheme theoretic theorem in equivalent to Theorem \ref{camthm}. \\




Let $P=\Inn(G)$. One can easily check that the partition association scheme $(G,P)$ is commutative. In fact, this association scheme has been already considered in many literatures. For instance, see \cite[\S 3.1]{bannai1990orthogonal}. In particular, we have the following:

\begin{mythm}\label{schemesimple}
Let $G$ be a finite group and $P$ be the group of inner automorphisms of $G$. Then the association scheme $(G,P)$ is primitive if and only if $G$ is simple. 
\end{mythm}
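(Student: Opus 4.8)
The plan is to translate the group-theoretic notion of simplicity into the scheme-theoretic notion of primitivity by passing through closed subsets of the partition association scheme $(G,P)$ with $P=\Inn(G)$. Recall that a commutative association scheme is \emph{primitive} precisely when it has no closed subsets other than $\{1_G\}$ and the whole scheme $S=\{G\}_P$; so the task is to establish a bijection (or at least an order-preserving correspondence) between closed subsets of $(G,P)$ and normal subgroups of $G$. First I would observe that, since $P=\Inn(G)$, the equivalence class $[x]$ under $\sim$ is exactly the conjugacy class of $x$ in $G$; hence the relations of $S$ are indexed by conjugacy classes, and the partition $\{G\}_P$ of $G\times G$ records the value of $xy^{-1}$ up to conjugacy. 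The structure constants were computed in Proposition \ref{associationschemepartitiongroup}, and from that formula one reads off that $[c]\in[a]*[b]$ iff $c$ is conjugate to a product $g_1(a)g_2(b)$ of a conjugate of $a$ with a conjugate of $b$.

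The key step is the following correspondence: for a normal subgroup $N\trianglelefteq G$, the set $T_N:=\{[n]\mid n\in N\}$ is a closed subset of $S$, and every closed subset of $S$ arises this way. That $T_N$ is closed is immediate since $N$ is closed under inverses and products and is a union of conjugacy classes (normality); the condition $T_N^*T_N\subseteq T_N$ then follows from the structure-constant description above. Conversely, given a closed subset $T$ of $S$, I would set $N_T:=\bigcup_{[a]\in T}[a]\subseteq G$, i.e. the union of the conjugacy classes in $T$; it is a union of conjugacy classes by construction (so conjugation-invariant, hence normal once it is a subgroup), it contains $e$ since $1_G=[e]\in T$, it is closed under inverses since $T^*=T$, and it is closed under multiplication because $T^*T\subseteq T$ forces every conjugacy class meeting a product $ab$ with $[a],[b]\in T$ to already lie in $T$. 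One then checks $T_{N_T}=T$ and $N_{T_N}=N$, so the two assignments are mutually inverse and visibly inclusion-preserving. Under this dictionary, $T=\{1_G\}$ corresponds to $N=\{e\}$ and $T=S$ corresponds to $N=G$, so $(G,P)$ has no intermediate closed subset iff $G$ has no nontrivial proper normal subgroup, i.e. iff $G$ is simple. (Alternatively, one can deduce this directly from Theorem \ref{camthm} together with Proposition \ref{normalproposition}: sub-hypergroups of $\mathbf{H}(S)=\{G\}_P$ correspond to closed subsets of $S$, and Campaigne's theorem says these are exhausted by the trivial one iff $G$ is simple; but I prefer the self-contained argument since it also identifies which closed subsets are normal.)

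The main obstacle I anticipate is the careful verification that the union-of-conjugacy-classes $N_T$ attached to a closed subset $T$ is genuinely a \emph{subgroup}, not merely a conjugation-stable subset containing $e$ and inverses: this is exactly where the closedness condition $T^*T\subseteq T$ must be used, via the structure-constant formula, to show that $a,b\in N_T$ implies $ab\in N_T$. One has to be slightly attentive that $[a]*[b]$ in the hyperoperation sense (equivalently, $a_{[a][b]}^{[c]}>0$) captures \emph{every} conjugacy class containing some $g_1(a)g_2(b)$, and in particular contains $[ab]$ itself (take $g_1=g_2=\mathrm{id}$); granting that, closedness of $T$ gives $[ab]\in T$, hence $ab\in N_T$. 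The remaining points — normality of $N_T$, the trivial-versus-whole endpoints of the correspondence, and the reverse inclusion checks $T_{N_T}=T$, $N_{T_N}=N$ — are routine once the dictionary is set up, and finiteness of $G$ is used only insofar as primitivity of association schemes is usually phrased for finite schemes.
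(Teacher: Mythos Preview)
Your argument is correct, but it is not the paper's argument. The paper does not prove Theorem~\ref{schemesimple} directly; instead it observes that Theorems~\ref{camthm} and~\ref{schemesimple} are \emph{the same statement} once one uses the identification $\mathbf{H}(S)=\{G\}_P$. Concretely, the paper notes that closed subsets of $S$ are exactly the sub-hypergroups of $\mathbf{H}(S)$ (one direction is Proposition~\ref{normalproposition}; the other is immediate since the hyperoperation on $\mathbf{H}(S)$ is complex multiplication, so a sub-hypergroup $L$ satisfies $1_X\in L$, $L^*=L$, and $L^*L=LL\subseteq L$). Hence primitivity of $(G,P)$ is equivalent to $\{G\}_P$ having no nontrivial sub-hypergroup, which by Campaigne's Theorem~\ref{camthm} is equivalent to $G$ being simple.

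Your route is genuinely different: you bypass Campaigne's theorem entirely and set up a direct, self-contained bijection $N\leftrightarrow T_N$ between normal subgroups of $G$ and closed subsets of $(G,P)$, using that conjugacy classes index the relations and that the structure constants encode products of conjugates. This buys you an elementary proof that does not quote an external hypergroup result and, as you note, makes the dictionary (normal subgroups $\leftrightarrow$ closed subsets) completely explicit. The paper's approach, by contrast, is shorter and serves its rhetorical purpose, namely to exhibit Theorems~\ref{camthm} and~\ref{schemesimple} as a single theorem seen from the hypergroup and the association-scheme sides respectively. You even flag this alternative yourself; it is precisely what the paper does.
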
 

One can immediately notice that Theorem \ref{camthm} and Theorem \ref{schemesimple} are the same statement through our discussion. To be specific, suppose that $H$ is a hypergroup which is realized by an association scheme $(X,S)$. Then each closed subset $T$ of $S$ becomes a sub-hypergroup of $H$. This implies that $(X,S)$ is primitive if and only if $H$ does not have a sub-hypergroup but $\{e\}$. It follows that an association scheme $(G,P)$ is primitive if and only if $\{G\}_P$ does not have a nontrivial sub-hypergroup. This proves that Theorems \ref{camthm} and \ref{schemesimple} are the same theorem.  

\subsection{Quotient construction}\label{quoquo}
In this subsection, we construct an example which naturally arises from the previous subsection $\S \ref{partition}$. We first recall some definitions.

\begin{mydef}\label{hyperringdef}
A hyperring is a set $R$ equipped with two operations $\cdot$, $\oplus$ with two distinguished elements $0_R$ and $1_R$ such that $(R,\cdot,1_R)$ is a commutative monoid and $(R,\oplus,0_R)$ is a commutative hypergroup. Furthermore $\cdot$ and $\oplus$ are compatible, i.e., $(a\oplus b)\cdot c=(a\cdot c)\oplus (b\cdot c)$ and $0_R$ is an absorbing element, i.e., $a\cdot 0_R=0_R$ for all $a \in R$. When $(R\backslash\{0\},\cdot,1_R)$ is a group, we say that $R$ is a hyperfield. 
\end{mydef}

\begin{rmk}
Strictly speaking, the definition of hyperrings given in Definition \ref{hyperringdef} should be called Krasner hyperrings. Hyperrings are more general objects, for instance, one does not assume the commutativity.  
\end{rmk}

\begin{rmk}
We note that recently, in \cite{baker2016matroids}, Baker and Bowler implemented a notion of matroids over hyperfields and generalized various classes of matroids (valuated matroids, oriented matroids, and phased matroids) in a very elegant way. 
\end{rmk}

One can easily construct a hyperring from a commutative ring in the following way:\\
Let $A$ be a commutative ring with $1_A$. Let $G$ be a subgroup of the group $A^\times$ of multiplicatively invertible elements of $A$. Then $G$ induces an equivalence relation $\sim$ of $A$ as follows:
\[
a\sim b \iff a=gb\textrm{ for some } g\in G.
\] 
Let $R:=A/G=\{[a] \mid a \in A\}$ be the set of equivalence classes of $\sim$, where $[a]$ is the equivalence class of $a \in A$. One imposes hyperoperation $\oplus$ and usual binary operation $\star$ on $R$ as follows: let $+$ be the addition and $\cdot$ be the multiplication of $A$, the for $[a],[b] \in R$, 
\begin{itemize}
\item
$[a]\oplus [b]:=\{[c]\mid c=g_1\cdot a+g_2\cdot b\textrm{ for some }g_1,g_2 \in G\}$.
\item
$[a]\star [b]:=[a\cdot b]$.
\end{itemize}
\begin{nothm}$($\cite[Proposition 2.6]{con3}$)$\\
With the same notations as above, $(R,\cdot,\oplus)$ is a hyperring. 
\end{nothm}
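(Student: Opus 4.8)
The plan is to verify directly, one at a time, the axioms in Definition \ref{hyperringdef}, taking $0_R := [0_A]$ and $1_R := [1_A]$ (and reading the ``$\cdot$'' of the statement as the class multiplication $\star$). The one technical device driving every computation is that $G$ is a \emph{group}, not merely a multiplicative subset of $A^{\times}$: since each $g_1\in G$ is invertible, $g_1 a + g_2 b = g_1\bigl(a + (g_2 g_1^{-1})b\bigr)$, so $[g_1 a + g_2 b] = [\,a + gb\,]$ with $g = g_2 g_1^{-1}\in G$, and hence
\[
[a]\oplus[b] = \{\,[\,a+gb\,] \mid g\in G\,\}.
\]
More generally any ``$G$-linear combination'' of representatives can be rewritten with one fewer free parameter, and $t\mapsto tg$, $t\mapsto gt$ are bijections of $G$; these are precisely the facts that make the set identities below collapse.

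First I would dispatch the routine parts. Since every $g\in G$ is a unit, $[0_A] = \{0_A\}$, whence $[a]\oplus[0_A] = \{[a]\}$ (so $0_R$ is an identity for $\oplus$) and $[a]\star[0_A] = [0_A] = 0_R$ (so $0_R$ is absorbing). Each $[a]\oplus[b]$ is nonempty, containing $[a+b]$. The operation $\star$ is well defined ($a'=ga$, $b'=hb \Rightarrow a'b' = (gh)(ab)$, $gh\in G$) and inherits commutativity, associativity, and the unit $[1_A]$ from $(A,\cdot)$, so $(R,\star,1_R)$ is a commutative monoid; well-definedness and commutativity of $\oplus$ follow from the reparametrization remark and from commutativity of $+$ in $A$.

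The substantive step is to check that $(R,\oplus,0_R)$ is a commutative hypergroup, i.e. associativity, unique inverses, and reversibility. For associativity I would compute
\[
([a]\oplus[b])\oplus[c] = \{\,[\,a+gb+hc\,] \mid g,h\in G\,\} = [a]\oplus([b]\oplus[c]),
\]
the middle equality holding because in $[a]\oplus([b]\oplus[c]) = \{[\,a+gb+ghc\,] \mid g,h\in G\}$ the product $gh$ runs over all of $G$ once $g$ is fixed. For inverses I claim $[a]^{-1} = [-a]$: plainly $0_R = [\,a + 1_A\cdot(-a)\,]\in [a]\oplus[-a]$; conversely, if $0_R\in[a]\oplus[b]$ then $a+gb = 0_A$ for some $g\in G$ (using $[0_A]=\{0_A\}$), so $a = g(-b)$, hence $[a]=[-b]$ and $[b]=[-a]$ — giving existence and uniqueness simultaneously. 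For reversibility, $[c]\in[a]\oplus[b]$ means $hc = a+gb$ for some $g,h\in G$, so $a = hc + g(-b)$ gives $[a] = [\,c + (gh^{-1})(-b)\,]\in[c]\oplus[b]^{-1}$, and symmetrically $[b]\in[a]^{-1}\oplus[c]$.

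Finally, compatibility: $([a]\oplus[b])\star[c] = \{[(a+gb)c]\mid g\in G\} = \{[\,ac+g(bc)\,]\mid g\in G\} = [ac]\oplus[bc] = ([a]\star[c])\oplus([b]\star[c])$ — and note this is an equality, as the definition demands — with the other-sided distributive law following by commutativity. I do not anticipate any genuine obstacle: the statement is exactly \cite[Proposition 2.6]{con3}, so one could simply cite it; in the direct verification the only thing needing care is the parameter bookkeeping in the associativity and reversibility computations, where one must keep exploiting that $G$ is closed under products and inverses so that composite parameters get absorbed into single free parameters.
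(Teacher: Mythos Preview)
Your direct verification is correct and carefully done; the reparametrization $[a]\oplus[b]=\{[a+gb]\mid g\in G\}$ is exactly the right device, and your treatments of associativity, unique inverses, reversibility, and distributivity all go through. The only comparison to make is that the paper does not actually prove this statement at all: it is quoted verbatim as \cite[Proposition 2.6]{con3} and left as a citation, so your proposal supplies strictly more than the paper does here.
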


\begin{myeg}\label{krasnerhyperfield}
Consider the hypergroup $\mathbf{K}=\{0,1\}$ given in Example \ref{krasnerasgroup}. One can impose a multiplication to $\mathbf{K}$ which is same as that of the field $\mathbb{F}_2$ with two elements. Then $\mathbf{K}$ becomes a hyperfield. In fact, one can obtain $\mathbf{K}$ from the above quotient construction by letting $A$ be any field $k$ such that $|k| \geq 3$ and $G=k^\times$. $\mathbf{K}$ is called the \emph{Krasner hyperfield}. 
\end{myeg}

\begin{pro}\label{projpro}
Let $A$ be a commutative ring with identity and $G$ be a (multiplicative) subgroup of the group $A^\times$ of units in $A$. Then $(A/G,\oplus)$ is realizable.  
\end{pro}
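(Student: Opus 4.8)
The plan is to realize $(A/G, \oplus)$ as a partition hypergroup in the sense of \S\ref{partition}, and then invoke Corollary \ref{realizalcor}. Specifically, I want to exhibit a group $\widetilde{G}$ and a subgroup $P \leq \Aut(\widetilde{G})$ such that the partition hypergroup $\{\widetilde{G}\}_P$ is isomorphic, as a hypergroup, to $(A/G, \oplus)$. The natural candidate is $\widetilde{G} = (A,+)$, the additive group of $A$, together with $P$ the group of automorphisms of $(A,+)$ given by multiplication by elements of $G$: each $g \in G$ acts on $(A,+)$ by $x \mapsto g \cdot x$, and this is an additive group automorphism since $g$ is a unit (its inverse is multiplication by $g^{-1}$) and multiplication distributes over addition. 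The assignment $g \mapsto (x \mapsto gx)$ is a group homomorphism $G \to \Aut(A,+)$; let $P$ be its image.

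First I would verify that the equivalence relation $\sim$ on $A$ defined by $P$ (i.e. $a \sim b$ iff $b = g(a)$ for some $g \in P$, i.e. $b = ga$ for some $g \in G$) coincides exactly with the equivalence relation used in the quotient-hyperring construction defining $A/G$; this is immediate from the definitions. Hence the underlying sets of $\{\widetilde{G}\}_P$ and $A/G$ are literally the same set of equivalence classes $\{[a] \mid a \in A\}$. Next I would compare the two hyperoperations. The partition hyperoperation on $\{\widetilde{G}\}_P$ is $[x]*[y] = \{[z] \mid z = x' + y' \text{ for some } x',y' \text{ with } [x']=[x],\,[y']=[y]\}$, which unwinds to $\{[z] \mid z = g_1 x + g_2 y,\ g_1,g_2 \in G\}$ — and this is precisely the definition of $[x]\oplus[y]$ in the quotient-hyperring construction. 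So the two hyperoperations agree on the nose, and the identity map on equivalence classes is a hypergroup isomorphism.

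The conclusion then follows: by Corollary \ref{realizalcor}, the partition hypergroup $\{(A,+)\}_P$ is realizable, being realized by the partition association scheme $((A,+),P)$ of Proposition \ref{associationschemepartitiongroup}; since $(A/G,\oplus)$ is isomorphic to it as a hypergroup, $(A/G,\oplus)$ is realizable as well. (If one wants finite realizability when $A$ is finite, the same association scheme works since its underlying set is $A$.)

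I do not expect a serious obstacle here — the statement is essentially a matter of recognizing that the quotient-hyperring additive structure is a special case of the partition construction with $G$ acting by homotheties on $(A,+)$. The one point requiring a line of care is checking that multiplication-by-$g$ is genuinely an \emph{automorphism} of the additive group (bijectivity uses that $g$ is a unit; additivity uses distributivity), and that the map $G \to \Aut(A,+)$ is a homomorphism so that $P$ is a bona fide subgroup; both are routine. Everything else is a direct unwinding of definitions, after which Corollary \ref{realizalcor} does the work.
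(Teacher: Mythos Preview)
Your proposal is correct and follows essentially the same route as the paper: take the additive group $(A,+)$, regard $G$ as acting by automorphisms via $g \mapsto (x \mapsto gx)$, observe that the resulting partition hypergroup coincides with $(A/G,\oplus)$, and invoke Corollary~\ref{realizalcor}. If anything, your version is slightly more careful in distinguishing $G$ from its image $P$ in $\Aut(A,+)$ (the map need not be injective), whereas the paper simply says one can ``consider $G$ as a subgroup of $\Aut(H)$''; but this makes no difference to the argument since the orbits and hence the hypergroup structure depend only on the image.
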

\begin{proof}
Let $H:=(A,+)$ be the (additive) abelian group structure of $A$. Then one can consider $G$ as a subgroup of $\Aut(H)$ in such a way that for each $g \in G$, $g:H \longrightarrow H$ sending $a$ to $ga$ (since $A$ is commutative, whether we multiply $g$ to the left or the right does not makes any difference). Then one can easily see that the hyperaddition $\oplus$ is identical to the hyperaddition $*$ defined in \eqref{hyperoperation}. It follows from Corollary \ref{realizalcor} that $(A/G,\oplus)$ is realizable.  
\end{proof}

\begin{myeg}\label{signquoconstruction}
Let $A=\mathbb{Q}$ be the field of rational numbers and $M=\mathbb{Q}_{>0}$ be the (multiplicative) subgroup of $\mathbb{Q}^\times$ which consists of positive rational numbers. Then $M$ induces an equivalence relation $\sim$ on $A$ as follows: for $a, b \in A$
\[
a \sim b \iff \textrm{ either }a=b=0\textrm{ or } ab \neq 0\textrm{ and } ab^{-1} \in M.
\]
Let $R:=\{[a] \mid a \in A\}$ be the set of equivalence class of $A$ under $\sim$. One can impose two operations similar to above: for $[a],[b] \in R$, 
\begin{itemize}
\item\label{hyperaddition}
$[a]\oplus [b]:=\{[c] \mid c=ag_1+bg_2\textrm{ for some }g_1,g_2 \in M\}$. 
\item
$[a]\cdot [b]:=[ab]$.
\end{itemize} 
One can easily observe that $R$ consists three element $[0],[1],[-1]$ and the above hyperoperation follows the rule of signs:
\[
[1]\oplus [1]=[1],\quad [-1]\oplus[-1]=[-1],\quad [1]\oplus[-1]=\{[0],[-1],[1]\}
\]
Also, the multiplicative structure of $R$ is same as that of $\mathbb{F}_3$, the field with three elements. Equipped with these two operations $\mathbf{S}:=\{0,-1,1\}$ becomes a hyperfield called the \emph{hyperfield of signs}. One can easily see that the argument in Proposition \ref{associationschemepartitiongroup} can be used to prove that $\mathbf{S}$ is realizable by using the underlying set $\mathbb{Q}$ (considered as an additive group) and two pairs $(x,y),(a,b) \in \mathbb{Q} \times \mathbb{Q}$ are in the same cell if and only if $(x-y)$ and $(a-b)$ have the same signs. In fact, French proved that $\mathbf{S}$ is realizable, but not finitely realizable. For more general results about quotient hyperrings, we refer the readers to \cite{con3}. 
\end{myeg}

Next, we explain how Proposition \ref{projpro} is linked to our first motivation given in \S \ref{motivation}. In what follows, we always assume that a projective geometry is finite.\\ 

Let $\mathbf{K}$ be the Krasner hyperfield (Example \ref{krasnerhyperfield}). Recall that one says that a commutative hypergroup $E$ is a $\mathbf{K}$-vector space if 
\begin{equation}
x+x=\{0,x\}, \quad \forall x\neq 0 \in E.
\end{equation} 
In particular, one has the following theorem.
\begin{nothm}$($\cite[Propositions 3.1 and 3.5, Theorem 3.8]{con3}$)$\label{cc}
\begin{enumerate}
\item
There exists one-to-one correspondence between finite $\mathbf{K}$-vector spaces and projective geometries (with some additional condition on number of points in a line).
\item
Let $G$ be a two-sided incidence group of a projective geometry $\mathcal{P}$ such that each line of $\mathcal{P}$ contains at least four points. Then there exists a unique hyperfield extension $L$ of $\mathbf{K}$ such that $\mathcal{P}$ is a projective geometry associated to $L$. 
\item
Let $L$ be a hyperfield extension of $\mathbf{K}$ and $\mathcal{P}$ be the projective geometry associated to $L$ (by considering $L$ as a $\mathbf{K}$-vector space). If $\mathcal{P}$ is Desarguesian and of dimension greater than $1$, there exist a unique pair $(F,K)$ of a field $F$ and a subfield $K$ of $F$ such that $L \simeq F/K^\times$.
\end{enumerate}
\end{nothm}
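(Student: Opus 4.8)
The statement is the theorem of Connes and Consani quoted from \cite{con3}, so the plan is to reduce all three parts to the structure theory of projective geometries developed there; I will only indicate the architecture of the argument. For part (1), given a finite $\mathbf{K}$-vector space $E$ I would take the set of points to be $E \setminus \{0\}$ and, for distinct points $x,y$, declare the line through them to be $\{x,y\} \cup (x+y)$. Reversibility forces $0 \notin x+y$, the idempotency $x+x=\{0,x\}$ forces this set to be closed under forming lines of its pairs, and associativity of the hyperaddition yields the Veblen--Young exchange axiom. Conversely, from a projective geometry $\mathcal{P}$ one puts $E = \mathcal{P} \cup \{0\}$, sets $x+0 = 0+x = \{x\}$, $x+x = \{0,x\}$, and $x+y = \ell(x,y) \setminus \{x,y\}$ for $x \neq y$ (with $\ell(x,y)$ the line spanned by $x,y$), and checks the hypergroup axioms — associativity being the one that really uses the incidence axioms. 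One then verifies the two passages are mutually inverse; the hypothesis on the number of points on a line is exactly what rules out the degenerate configurations where this fails.

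For part (2), I would equip $L := \mathcal{P} \cup \{0\}$ with the hyperaddition from part (1) and the multiplication extending the given two-sided incidence group on $\mathcal{P}$ by declaring $0$ absorbing and $1_G$ the unit. Commutativity of multiplication is part of the definition of a hyperfield (Definition \ref{hyperringdef}); distributivity is the crux, and this is where the \emph{two-sided} hypothesis enters: left and right translations by elements of $G$ are collineations, hence send lines to lines, hence commute with the line operation and therefore with $\oplus$, which is precisely $(a\oplus b)\cdot c = a\cdot c \oplus b\cdot c$ together with its left analogue. Uniqueness is immediate: the multiplication is forced by $G$ and the addition is forced, via part (1), by the incidence structure of $\mathcal{P}$.

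For part (3), the main input is the fundamental theorem of projective geometry: a Desarguesian geometry of dimension $\geq 2$ is the projective space $\mathbb{P}_D(V)$ of a vector space $V$ over a division ring $D$. I would then argue that the hyperfield structure on $L$ — commutativity of multiplication and the compatibility of $\oplus$ with it — forces $D$ to be a field $K$ and endows $V$ with the structure of a field extension $F \supseteq K$, so that $L \cong F/K^\times$ as hyperfields and the hyperaddition on $L$ matches the quotient hyperaddition of $\S\ref{quoquo}$; uniqueness of the pair $(F,K)$ then follows by recovering $K^\times$ as the group of scalars acting trivially on $\mathcal{P}$ and $F$ as the corresponding model. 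The delicate part throughout is the bookkeeping between points of $\mathcal{P}$, classes in $(F \setminus 0)/K^\times$, and lines versus $K^\times$-cosets; this is the step I expect to absorb most of the work, and for the complete argument I refer to \cite{con3}.
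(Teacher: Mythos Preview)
The paper does not give a proof of this theorem at all: it is stated as a quotation of results of Connes and Consani (Propositions~3.1, 3.5 and Theorem~3.8 of \cite{con3}) and is used immediately afterwards as a black box to derive Corollary~\ref{projectivegeoreult}. So there is no ``paper's own proof'' to compare your proposal against.

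That said, your proposal is already more than what the paper offers, and your architectural sketch is essentially the one Connes and Consani follow: the points--lines dictionary in part~(1), the promotion of the incidence group to a multiplicative structure with distributivity coming from the collineation property in part~(2), and the appeal to the fundamental theorem of projective geometry in part~(3). One small wrinkle worth flagging in part~(2): you write that commutativity of multiplication is ``part of the definition of a hyperfield,'' which is true, but this does not by itself explain why the two-sided incidence group $G$ is abelian; that commutativity is something one has to \emph{extract} from the hypotheses (and Connes--Consani do so). Since you explicitly defer the full argument to \cite{con3}, this is not a gap in your write-up, but it is the one place where the sketch glosses over a genuine step.
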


By combining this theorem with Proposition \ref{projpro}, we obtain the following result. 

\begin{cor}\label{projectivegeoreult}
Let $\mathcal{P}$ be a Desarguesian projective geometry of dimension $\geq 2$, equipped with a two-sided incidence group $G$. Suppose further that each line of $\mathcal{P}$ contains at least four points. Then there exists an association scheme $S$ on a nonempty set $X$ such that $\mathbf{H}(S)$ is a hypergroup which corresponds to $\mathcal{P}$ under the construction of Connes and Consani. In particular, in this case, $\mathcal{P}$ naturally becomes an association scheme in such a way that one can recover all points on the line from complex multiplication. 
\end{cor}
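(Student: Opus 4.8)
The plan is to chain together the results already in hand. By Theorem \ref{cc}(1), the projective geometry $\mathcal{P}$ corresponds to a finite $\mathbf{K}$-vector space $E$; and by Theorem \ref{cc}(2), the presence of the two-sided incidence group $G$ together with the ``at least four points per line'' hypothesis produces a (unique) hyperfield extension $L$ of $\mathbf{K}$ whose associated projective geometry is $\mathcal{P}$. Since $\mathcal{P}$ is assumed Desarguesian of dimension $\geq 2$, Theorem \ref{cc}(3) applies to $L$, yielding a field $F$ and a subfield $K$ with $L \simeq F/K^\times$ as hyperfields; in particular the underlying additive hypergroup of $L$ is $(F/K^\times, \oplus)$ in the sense of the quotient construction of \S \ref{quoquo}, with $A = F$ and $G = K^\times \leq F^\times = A^\times$.

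Next I would invoke Proposition \ref{projpro} with this $A = F$ and this subgroup $K^\times$: it tells us that $(F/K^\times, \oplus)$ is realizable, i.e. there is an association scheme $S$ on a nonempty set $X$ — concretely, by Proposition \ref{associationschemepartitiongroup} and Corollary \ref{realizalcor}, one takes $X = (F,+)$ and declares $(x,y) \sim (a,b)$ iff $x-y$ and $a-b$ lie in the same $K^\times$-orbit — with $\mathbf{H}(S) \simeq (F/K^\times, \oplus)$. Composing the identifications, $\mathbf{H}(S)$ is (isomorphic to) the additive hypergroup of $L$, which is exactly the commutative hypergroup that corresponds to $\mathcal{P}$ under the Connes--Consani construction via Theorem \ref{cc}(1). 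This gives the commuting triangle of Question \ref{question}.

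For the final sentence, I would simply unwind what ``corresponds to $\mathcal{P}$'' means: in the Connes--Consani correspondence the points of $\mathcal{P}$ are the nonzero elements of the $\mathbf{K}$-vector space $\mathbf{H}(S)$ up to the relation identifying $x$ with the other element of $x+x$, and the line through two distinct points $[x],[y]$ is read off from the hypersum $[x]+[y]$ (together with $[x],[y]$ themselves). Since the hyperoperation on $\mathbf{H}(S)$ is precisely the complex multiplication of the association scheme $S$, the incidence data of $\mathcal{P}$ — in particular all points lying on the line joining two given points — is recovered from complex multiplication in $S$. Hence $\mathcal{P}$ itself acquires the structure of an association scheme with this property.

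The main obstacle is essentially bookkeeping rather than a new idea: one must check that the ``additive hypergroup of the hyperfield $F/K^\times$'' appearing in Theorem \ref{cc}(3) is literally the hypergroup $(A/G,\oplus)$ of the quotient construction to which Proposition \ref{projpro} applies (this is immediate from the definitions of the quotient hyperring and of $\oplus$), and that the hypergroup isomorphisms produced along the way are compatible with the two correspondences of Theorems \ref{asshyp}--\ref{corr}, so that the triangle in Question \ref{question} genuinely commutes rather than commuting only up to a non-canonical isomorphism. I expect no difficulty here beyond tracking the identifications carefully, since each arrow in the chain has already been established in the cited results.
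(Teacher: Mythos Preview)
Your proposal is correct and follows exactly the approach the paper intends: the corollary is stated immediately after the sentence ``By combining this theorem with Proposition \ref{projpro}, we obtain the following result,'' with no further proof given, and your argument is precisely the spelled-out version of that combination---use parts (2) and (3) of Theorem~\ref{cc} to write the hypergroup corresponding to $\mathcal{P}$ as $(F/K^\times,\oplus)$, then apply Proposition~\ref{projpro} to realize it by an association scheme. Your unpacking of the final sentence via the Connes--Consani dictionary is also what the paper has in mind.
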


\begin{rmk}
\begin{enumerate}
\item
Since any projective geometry of dimension $\geq 3$ is Desarguesian, Theorem \ref{cc} implies that most projective geometries come from the quotient type. We refer the readers to \cite[\S 3]{con3} for details.
\item
The above theorem of Connes and Consani links the classification problem of finite extensions of $\mathbf{K}$ to an abelian case of a long-standing conjecture on the existence of finite non-Desarguesian plane with a simply transitive group of collineations. For the notion of non-Desarguesian plane, see \cite{weibel2007survey}. Also, for the number theoretic interpretation of the conjecture, see \cite{thas2008finite}. 
\item 
As we previously mentioned, it is well-known that projective geometry has an association scheme structures by means of flags (cf. \cite{chakravarti1993three} or Example \ref{fanoexample}). But, this association scheme is not commutative, in particular, the association scheme structure defined by flags does not realize the hypergroup structure we obtain from projective geometry, whereas Corollary \ref{projectivegeoreult} states that in some cases, projective geometry $\mathcal{P}$ itself can be considered as an association scheme $S$ and the complex multiplication of $S$ agrees with the hypergroup structure which we obtain from the incidence relation of $\mathcal{P}$. 
\end{enumerate}
\end{rmk}

\subsection{Linearly ordered hypergroups}\label{linear}
Recall that to a totally ordered set $(G,\leq)$, one can canonically implement a (commutative) hypergroup structure. In fact, we let $G'=G \cup \{\infty\}$ and give an order to $\infty$ such that $g < \infty$ $\forall g \in G$. One can define hyperoperation on $G'$ as follows:
\begin{equation}\label{linearhyper}
x+ y =\left\{ \begin{array}{ll}
\min\{x,y\} & \textrm{if $x\neq y$}\\
\left[x,\infty\right]:=\{g \in G' \mid x \leq g\}& \textrm{if $x=y$}
\end{array} \right.
\end{equation}
Suppose that $G$ is a totally ordered abelian group. Then one can easily see that $G$ becomes a hyperfield with hyperoperation given in \eqref{linearhyper} (see \cite{viro} for more details). It is well-known that for any totally ordered abelian group $G$, there exist a field $F$ and a non-Archimedean valuation $\nu$ on $F$ such that the value group of $\nu$ is $G$. One can naturally extend the valuation $\nu$ to $\nu:F \longrightarrow G'$ by letting $\nu(0)=\infty$. In this subsection, we will assume that a valuation $\nu$ satisfies the following triangle condition:\\

\textbf{(Triangle Condition)} For all $a,b,a',b' \in F$, $r \in G'$ such that $\nu(a-b)=\nu(a'-b')=r$, we have the following set bijection of nonempty sets:
\begin{equation}\label{triangle}
\{y \in F \mid \nu(a-y)=\nu(y-b)=r\} \simeq \{y \in F \mid \nu(a'-y)=\nu(y-b')=r\}.
\end{equation}
\\
\begin{myeg}
For example, when $F$ is $\mathbb{Q}_p$ ($p$-adic numbers), $\mathbb{C}\{\{t\}\}$ (Puiseux series), or $k((G))$ (Mal'cev-Neumann ring) with $k$ is algebraically closed field and $G$ is a divisible subgroup of $\mathbb{R}$, the triangle condition is fulfilled. For more details about these objects, we refer the readers to \cite[\S 2.1]{bernd}.
\end{myeg}

Now, let's assume that there is a non-Archimedean valuation $\nu:F \longrightarrow G'$ which satisfies the triangle condition \eqref{triangle}. We can naturally associate an association scheme which has an underlying set $F$ as follows. First, for each $g \in G'$, we define the following set:
\[
F_g:=\{(a,b) \in F\times F \mid \nu(a-b)=g\}.
\] 

\begin{rmk}
A non-Archimedean valuation $\nu$ induces a metric topology on $F$. One can think of $F_g$ as the set of pairs of distance $g$ for $g \in G$. Since $G$ is totally ordered, for example such as $\mathbb{R}$, this makes sense. 
\end{rmk}

One can easily see that $\{F_g\}_{g \in G'}$ becomes a partition of $F\times F$ since we assume that the value group of $\nu$ is $G$. Note that $F_{\infty}=\{(a,a)\mid a \in F\}$. Also, since $\nu$ is a valuation, one has 
\[
F_g=\{(a,b) \in F\times F \mid \nu(b-a)=g\}.
\] 
All it remains to show that $\{F_g\}_{g \in G'}$ is an association scheme on $F$ is the structure constants condition. For the notational convenience, let $g:=F_g$ for each $g \in G'=G \cup \{\infty\}$. We compute structure constants to show that $\{F_g\}_{g \in G'}$ is an association scheme. Let $p,g,r \in G'$.\\ 


\textbf{Case 1: when $p,q,r \in G'$ are all distinct.} In this case, we have $a^r_{pq}=0$. In fact, for $(a,b) \in r$ and $y \in F$, we have
\begin{equation}\label{nonarc}
r=\nu(b-a)=\nu(b-y+y-a) \geq \min (\nu(b-y),\nu(a-y)).
\end{equation}
If $p=\nu(b-y)$ and $q=\nu(a-y)$, then since we assumed that $p \neq q$ and $\nu$ is non-Archimedean, this implies that $r=p$ or $r=q$. Thus in this case $a_{pq}^r=0$.\\

\textbf{Case 2: $p=q=r$.} In this case, we have a well-defined $a_{pq}^r$ from the triangle condition which we assumed. Moreover, we have $a_{pp}^p \neq 0$ also from the triangle condition.\\

\textbf{Case 3: $r< p=q$.} In this case, it follows from \eqref{nonarc} that $a_{pq}^r=0$.\\

\textbf{Case 4: $r > p=q$.} For $(a,b),(a',b') \in r$, let us define the following sets:
\[
A:=\{y \in F\mid (a,y) \in p, (y,b)\in p\}, \quad B:=\{y' \in F\mid (a',y') \in p, (y',b')\in p\}.
\]
Let $t =a-a'$ and define the following map:
\[
\varphi: A\longrightarrow B, \quad y \mapsto y'=y-t.
\]
Then $\varphi$ is well-defined. In fact, 
\[\nu(y'-a')=\nu(y-t-a')=\nu(y+a'-a-a')=\nu(y-a).\]
Also, we have
\[
\nu(b'-y')=\nu(b'-a'+a'-y')\geq \min(\nu(b'-a'),\nu(a'-y'))=\min(r,p).
\]
Furthermore, since we assumed that $r >p$ and $\nu$ is non-Archimedean, this implies that $\nu(b'-y')=p$. This shows that $y' \in B$. Clearly, $\varphi$ is a bijection and one can similarly show that $\varphi^{-1}$ is also well-defined. Furthermore, in this case, $a_{pp}^r\neq 0$. In fact, since we assumed that $\nu(F)=G'$, there exist $\gamma,y \in F$ such that $\nu(\gamma)=r$ and $\nu(y)=p$. Hence $(0,\gamma) \in r$ and $(0,y) \in p$. It is enough to show that $(y,\gamma) \in p$ or equivalently, $\nu(\gamma-y)=p$. However, we have
\[
\nu(\gamma-y)\geq \min(\nu(\gamma),\nu(y))=\min(r,p).
\]
But, since $r>p$, this implies that $\nu(\gamma-y)=p$. Therefore $a_{pp}^r \neq 0$.\\

\textbf{Case 5: $r=p >q$ or $r=q >p$.} Since these two are symmetric, it is enough to consider the case when $r=p >q$. Suppose that $(a,b) \in r$, $y \in F$ and $(a,y) \in p$, $(y,b) \in q$. Then we have
\[
r=\nu(a-b)=\nu(a-y+y-b) \geq \min (\nu(a-y),\nu(y-b)) =\min(r,q).
\] 
Again, since $r >q $ and $\nu$ is non-Archimedean, this implies that $r=q$ which is a contradiction. Thus, $a_{pq}^r=0$.\\ 

This proves that $\{F_g\}_{g\in G'}$ is an association scheme. One can, in fact, observe that this association scheme realizes a hypergroup we introduced at the beginning of this subsection (see \eqref{linearhyper}). Precisely, for $F_x,F_y$ with $x, y \in G'$, it follows from the above computations that
\[
F_x * F_y:=\{F_z \mid a_{xy}^z \neq 0\}=\left\{ \begin{array}{ll}
F_{\min\{x,y\}} & \textrm{if $x\neq y$}\\
\{F_z \mid z \geq x\}& \textrm{if $x=y$}
\end{array} \right.
\]
Lastly, we remark that a trivial valuation is a special case of non-Archimedean valuation which satisfies the triangle condition. In this case, we recover the fact that the hypergroup $\mathbf{K}$ in Example \ref{krasnerasgroup} is realizable.

\bibliography{ass}
\bibliographystyle{alpha}
\end{document}